\newcommand{\Z}{\mathbb{Z}}
\newcommand{\N}{\mathbb{N}}
\newcommand{\Q}{\mathbb{Q}}
\newcommand{\F}{\mathbb{F}}
\newcommand{\fm}{\mathfrak{m}}
\newcommand{\cC}{\mathcal{C}}
\newcommand{\cF}{\mathcal{F}}
\newcommand{\cG}{\mathcal{G}}
\newcommand{\cO}{\mathcal{O}}
\newcommand{\cP}{\mathcal{P}}
\newcommand{\cR}{\mathcal{R}}
\newcommand{\ol}[1]{\overline{#1}}
\DeclareMathOperator{\Gal}{Gal}
\DeclareMathOperator{\Ker}{Ker}
\DeclareMathOperator{\Fitt}{Fitt}
\DeclareMathOperator{\ab}{ab}
\DeclareMathOperator{\ord}{ord}
\DeclareMathOperator{\Hom}{Hom}
\DeclareMathOperator{\Ext}{Ext}
\DeclareMathOperator{\rank}{rank}
\DeclareSymbolFont{cyrletters}{OT2}{wncyr}{m}{n}
\DeclareMathSymbol{\Sha}{\mathalpha}{cyrletters}{"58}
\let\oldenumerate\enumerate
\renewcommand{\enumerate}{
   \oldenumerate
   \setlength{\itemsep}{1pt}
   \setlength{\parskip}{0pt}
   \setlength{\parsep}{0pt}
}
\let\olditemize\itemize
\renewcommand{\itemize}{
   \olditemize
   \setlength{\itemsep}{1pt}
   \setlength{\parskip}{0pt}
   \setlength{\parsep}{0pt}
}
\theoremstyle{plain}
\newtheorem{thm}{Theorem}[section]
\newtheorem{lem}[thm]{Lemma}
\newtheorem{prop}[thm]{Proposition}
\newtheorem{cor}[thm]{Corollary}
\newtheorem{ass}[thm]{Assumption}
\theoremstyle{definition}
\newtheorem{defn}[thm]{Definition}
\newtheorem{setting}[thm]{Setting}
\newtheorem{rem}[thm]{Remark}
\newtheorem{eg}[thm]{Example}
\DeclareMathOperator{\gen}{gen}
\DeclareMathOperator{\ch}{char}
\DeclareMathOperator{\Frac}{Frac}
\DeclareMathOperator{\Tor}{Tor}
\DeclareMathOperator{\RG}{\mathsf{R}\Gamma}
\title
{Minimal resolutions of Iwasawa modules}
\author[T.~Kataoka]{Takenori Kataoka}
\address{Department of Mathematics, Faculty of Science Division II, Tokyo University of Science.
1-3 Kagurazaka, Shinjuku-ku, Tokyo 162-8601, Japan}
\email{tkataoka@rs.tus.ac.jp}
\author[M.~Kurihara]{Masato Kurihara}
\address{Faculty of Science and Technology, Keio University.
3-14-1 Hiyoshi, Kohoku-ku, Yokohama, Kanagawa 223-8522, Japan}
\email{kurihara@keio.jp}
\date{\today}
\begin{document}

\maketitle

\begin{abstract}
In this paper, we study the module-theoretic structure of classical Iwasawa modules.
More precisely, for a finite abelian $p$-extension $K/k$ of totally real fields and the cyclotomic $\Z_p$-extension $K_{\infty}/K$, we consider $X_{K_{\infty},S}=\Gal(M_{K_{\infty},S}/K_{\infty})$ where 
$S$ is a finite set of places of $k$ containing all ramifying places in $K_{\infty}$ and  archimedean places, and 
$M_{K_{\infty},S}$ is the maximal abelian pro-$p$-extension of $K_{\infty}$ unramified outside $S$.
We give lower and upper bounds of the minimal numbers of generators and of relations of $X_{K_{\infty},S}$ as a $\Z_p[[\Gal(K_{\infty}/k)]]$-module, using the $p$-rank of $\Gal(K/k)$.
This result explains the complexity of $X_{K_{\infty},S}$ as a $\Z_p[[\Gal(K_{\infty}/k)]]$-module when the $p$-rank of $\Gal(K/k)$ is large.  
Moreover, we prove an analogous theorem in the setting that $K/k$ is non-abelian. 
We also study the Iwasawa adjoint of $X_{K_{\infty},S}$, and the minus part of the unramified Iwasawa module for a CM-extension.  
In order to prove these theorems, we systematically study the minimal resolutions of $X_{K_{\infty},S}$.
\end{abstract}

\section{Introduction}\label{sec:intro}

Throughout this paper we fix a prime number $p$.
We write $F_{\infty}$ for the cyclotomic $\Z_p$-extension of $F$ for any number field $F$.

Let $K/k$ be a finite abelian $p$-extension of totally real fields
(see Theorem \ref{thm:bound2} for the non-abelian case).  
We consider the abelian extension $K_{\infty}/k$, whose Galois group we denote by $\cG=\Gal(K_{\infty}/k)$. 
Suppose that $S$ is a finite set of places of $k$, containing all archimedean places and 
all places that ramify in $K_{\infty}$. 
In particular, $S$ contains all $p$-adic places. 
Let $M_{K_{\infty},S}$ denote the maximal abelian pro-$p$-extension of $K_{\infty}$ unramified outside $S$. 
Our main purpose in this paper is to study the classical Iwasawa module 
$X_{K_{\infty},S}=\Gal(M_{K_{\infty},S}/K_{\infty})$ over the Iwasawa algebra $\cR = \Z_p[[\cG]]$.

Define $I_{\cG}$ 
to be the augmentation ideal of $\cR = \Z_p[[\cG]]$, namely 
$I_{\cG}=\Ker (\Z_p[[\cG]] \to \Z_p)$. 
We write $Q(\cR)$ for the total quotient ring of $\cR$. 
We consider an $\cR$-submodule $\cR^{\sim}$ of $Q(\cR)$, 
which consists of elements $x \in Q(\cR)$, satisfying $x I_{\cG} \subset \cR$.
This is the module of pseudo-measures of $\cG$ in the sense of Serre.
The $p$-adic $L$-function  of Deligne and Ribet is an element $g_{K_{\infty}/k,S}$ in $\cR^{\sim}$, 
satisfying the following property.  
Suppose that $\kappa:\cG \to \Z_p^\times$ is the cyclotomic character. 
For a character $\psi$ of $\cG$ of finite order with values in an algebraic 
closure $\overline{\Q}_{p}$ of $\Q_p$ and for a positive integer $n$, one can extend a character $\kappa^n \psi: \cG \to \overline{\Q}_{p}^{\times}$ to a ring homomorphism $\cR \to \overline{\Q}_{p}$, and also to $\cR^{\sim} \to \overline{\Q}_{p}$.
Then $g_{K_{\infty}/k,S}$ satisfies  
$$\kappa^n\psi(g_{F_{\infty}/k,S})=L_{S}(1-n, \psi\omega^{-n})$$
for any character $\psi$ of $\cG$ of finite order and for any positive integer $n \in \Z_{>0}$,
where $L_{S}(s, \psi\omega^{-n})$ is the $S$-truncated $L$-function, 
and $\omega$ is the Teichm\"{u}ller character. 

Put $G = \Gal(K_{\infty}/k_{\infty})$.
In \cite[Theorem 3.3]{GK15} and \cite[Theorem 4.1]{GK17}, as a refinement of the usual main conjecture, 
Greither and the second author computed the Fitting ideal of $X_{K_{\infty},S}$ as an $\cR$-module to obtain 
\begin{equation}\label{GKTFitt}
\Fitt_{\cR}(X_{K_{\infty},S}) = {\mathfrak a}_{G} I_{\cG} g_{K_{\infty}/k, S},
\end{equation}
where ${\mathfrak a}_{G}$ is a certain ideal of $\cR$ which is determined only by the group 
structure of $G$. 
The explicit description of ${\mathfrak a}_{G}$ is obtained in \cite[\S 1.2]{GKT20} by Greither, Tokio and the second author.
We do not explain this ideal ${\mathfrak a}_{G}$ in this paper, 
but only mention two facts. 
If $s$ is the $p$-rank of $G$ (i.e., $s = \dim_{\F_p}(\F_p \otimes_{\Z} G)$) and ${\mathfrak m}_{\cR}$ is the maximal ideal of $\cR$, 
then we have ${\mathfrak a}_{G} \subset {\mathfrak m}_{\cR}^{s(s-1)/2}$. 
Also, if $G$ is isomorphic to $(\Z/p^m)^{\oplus s}$, then  ${\mathfrak a}_{G}=(p^m\cR+I_{\cG})^{s(s-1)/2}$.
We also note here that the classical main conjecture in Iwasawa theory studies the character 
component $X_{K_{\infty},S}^{\psi}$ for $K$ which corresponds to the kernel of $\psi$. 
In this case, $G$ is cyclic, and only the case $s=1$ is studied.

The above computation of $\Fitt_{\cR}(X_{K_{\infty},S})$ suggests that $X_{K_{\infty},S}$ is complicated as an $\cR$-module when the $p$-rank $s$ of $G$ is large. 
To understand such complicatedness, we study in this paper the minimal numbers of generators and relations of $X_{K_{\infty},S}$. 
Let $\gen_{\cR}(X_{K_{\infty}, S})$ (resp.~$r_{\cR}(X_{K_{\infty}, S})$) be the 
minimal number of generators (resp.~of relations) of $X_{K_{\infty}, S}$ as an $\cR$-module. 

In order to state the main result of this paper, we need the maximal abelian pro-$p$-extension $M_{k, S}$ of $k$ unramified outside $S$.
By our choice of $S$, we have 
\[
k_{\infty} \subset K_{\infty} \subset M_{k, S}.
\]

Now we state the main result of this paper.
For any abelian group $A$, we define its $p$-rank by $\rank_p A = \dim_{\F_p}(\F_p \otimes_{\Z} A)$, which is finite in all cases we consider in this paper.

\begin{thm}\label{thm:bound1}
Let us write
\[
s = \rank_p \Gal(K_{\infty}/k_{\infty})= \rank_p G,
\quad
t = \rank_p \Gal(M_{k, S}/K_{\infty})
\]
for the $p$-ranks of the Galois groups.
Then we have
\[
\max \left\{ \frac{s(s+1)}{2}, t \right\} 
\leq \gen_{\cR}(X_{K_{\infty}, S})
\leq \frac{s(s+1)}{2} + t
\]
and
\[
r_{\cR}(X_{K_{\infty}, S})=\frac{s(s+1)(s+2)}{6}+ \gen_{\cR}(X_{K_{\infty}, S}).
\]
\end{thm}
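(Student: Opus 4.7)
The plan is to translate the numbers $\gen_{\cR}(X)$ and $r_{\cR}(X)$ for $X := X_{K_\infty, S}$ into Tor invariants and then evaluate them through Galois cohomology. By Nakayama's lemma for the local ring $\cR$ with residue field $\F_p$,
\[
\gen_{\cR}(X) = \dim_{\F_p}\Tor_0^{\cR}(\F_p, X), \qquad r_{\cR}(X) = \dim_{\F_p}\Tor_1^{\cR}(\F_p, X).
\]
Factoring $\cR \twoheadrightarrow \Z_p \twoheadrightarrow \F_p$ and applying the Grothendieck composition spectral sequence
\[
E^2_{i,j} = \Tor_i^{\Z_p}\bigl(\F_p, \Tor_j^{\cR}(\Z_p, X)\bigr) \Longrightarrow \Tor_{i+j}^{\cR}(\F_p, X)
\]
gives, using $\Tor_j^{\cR}(\Z_p, X) = H_j(\cG, X)$ and the vanishing $\Tor_i^{\Z_p}(\F_p, -) = 0$ for $i \geq 2$, short exact sequences
\[
0 \to H_n(\cG, X)/p \to \Tor_n^{\cR}(\F_p, X) \to H_{n-1}(\cG, X)[p] \to 0.
\]
Thus $\gen_{\cR}(X) = \dim_{\F_p}(X_{\cG}/p)$ and $r_{\cR}(X) = \dim_{\F_p}(X_{\cG}[p]) + \dim_{\F_p}(H_1(\cG, X)/p)$.

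To analyze $X_{\cG}$, I will invoke the Lyndon--Hochschild--Serre 5-term sequence for the pro-$p$ extension $1 \to X \to \cN \to \cG \to 1$ with $\cN = \Gal(M_{K_\infty, S}/k)$. The key observation is that $M_{k,S}$ is the maximal abelian pro-$p$ subextension of $M_{K_\infty, S}/k$ unramified outside $S$, so $\cN^{\ab} = \Gal(M_{k,S}/k)$. The 5-term sequence then yields
\[
H_2(\cG, \Z_p) \to X_{\cG} \to \Gal(M_{k,S}/K_\infty) \to 0.
\]
Since $\cG \cong \Gamma \times G$ has $p$-rank $s+1$, the group $H_2(\cG, \Z_p) = \Lambda^2_{\Z_p}\cG$ has $p$-rank $\binom{s+1}{2} = s(s+1)/2$. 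Counting mod-$p$ contributions gives the upper bound $\gen_{\cR}(X) \leq s(s+1)/2 + t$, and the surjection onto $\Gal(M_{k,S}/K_\infty)/p$ gives $\gen_{\cR}(X) \geq t$. The remaining lower bound $\gen_{\cR}(X) \geq s(s+1)/2$ requires showing that the edge map $\Lambda^2_{\Z_p}\cG \to X_{\cG}$---which encodes the commutator pairing in $\cN$---has image of full mod-$p$ rank; for this I will exploit that $X$ has no nontrivial finite $\cR$-submodule, a known consequence of Iwasawa's theorem for totally real base fields.

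For the exact identity $r_{\cR}(X) = \gen_{\cR}(X) + s(s+1)(s+2)/6$, the Tor translation reduces the problem to establishing
\[
\dim_{\F_p}(H_1(\cG, X)/p) - \bigl(\dim_{\F_p}(X_{\cG}/p) - \dim_{\F_p}(X_{\cG}[p])\bigr) = \binom{s+2}{3}.
\]
I will compute $H_1(\cG, X)$ by extracting it from higher terms of the LHS spectral sequence (involving $H_3(\cG, \Z_p) = \Lambda^3_{\Z_p}\cG$ of $p$-rank $\binom{s+1}{3}$) together with global duality (Poitou--Tate), leveraging the weak Leopoldt vanishing $H^2(G_{K_\infty, S}, \Q_p/\Z_p) = 0$ known for totally real fields. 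The combinatorial identity $\binom{s+2}{3} = \binom{s+1}{2} + \binom{s+1}{3}$ matches the expected split between second- and third-order Koszul contributions over the cotangent space $\mathfrak{m}_{\cR}/\mathfrak{m}_{\cR}^2$, which has $\F_p$-dimension $s+2$---this is consistent with the Poincar\'e series $(1+t)^{s+2}/(1-t)^s$ of $\F_p$ as an $\cR$-module.

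The main obstacle will be the computation of $H_1(\cG, X)$: it is a genuinely second-order invariant that amounts to describing the second syzygy of $X$ in its minimal $\cR$-resolution. I expect this to require a careful combination of Iwasawa-theoretic Tate sequences and global duality to transport data from $K_{\infty}$ back to $k$. The supplementary lower bound $s(s+1)/2$ for $\gen_{\cR}(X)$ is also delicate, since it rules out a degeneration of the commutator pairing that would otherwise be invisible from the 5-term sequence alone.
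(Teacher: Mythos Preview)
Your spectral-sequence reformulation and the five-term Hochschild--Serre sequence for $1 \to X \to \cN \to \cG \to 1$ correctly yield $t \leq \gen_{\cR}(X_{K_\infty,S}) \leq s(s+1)/2 + t$. The paper proves these two bounds by essentially the same mechanism, taking $G$-coinvariants first (Lemma~\ref{lem:coh1}) and then $\Gamma$-coinvariants rather than passing to $\cG$-coinvariants in one step.

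There is, however, a genuine gap at the lower bound $\gen_{\cR}(X_{K_\infty,S}) \geq s(s+1)/2$. Your plan is to show that the edge map $H_2(\cG,\Z_p) \to X_{\cG}$ has full mod-$p$ rank by invoking that $X$ has no nontrivial finite $\cR$-submodule, but that property concerns $X$ itself and says nothing about $X_{\cG}$ or about the commutator pairing in $\cN$; there is no mechanism by which it forces injectivity of the edge map. The obstruction to injectivity is the image of $H_2(\cN,\Z_p) \to H_2(\cG,\Z_p)$, which you have not addressed and which is bound up with the relation-structure of $G_{k,S}^{(p)}$ --- i.e.\ with Leopoldt-type questions for $k$ that the theorem deliberately does not assume. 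The paper's route is entirely different: it builds the Tate sequence $0 \to X_{K_\infty,S} \to P \to Q \to \Z_p \to 0$ with $\pd_{\cR}(P), \pd_{\cR}(Q) \leq 1$ out of $\RG_c(\cO_{K_\infty,S},\Z_p(1))$ (using only weak Leopoldt for $K_\infty$, which is known), and then \emph{specializes} modulo a height-one prime $f$ of $\Lambda$ chosen so that $\cO_f := \Lambda/(f)$ is a DVR. Over the DVR one obtains an honest inclusion $H_2(G,\Z/p^m\Z) \hookrightarrow (X/f)_G$, and since over a DVR the generator count of a submodule is bounded by that of the ambient module (Remark~\ref{rem:gen_pr}(3)) --- a fact that fails over $\Lambda[G]$ itself --- the bound follows. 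This specialization trick is the missing idea.

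The relation identity $r_{\cR}(X)-\gen_{\cR}(X)=s(s+1)(s+2)/6$ is likewise a gap. You correctly reduce it to determining $\dim_{\F_p}(H_1(\cG,X)/p)$, but ``extract from higher LHS terms together with Poitou--Tate'' is not yet a method: the LHS spectral sequence for $\cN$ feeds $H_1(\cG,X)$ from $H_3(\cG,\Z_p)$ and $H_0(\cG,\Lambda^2 X)$ through differentials you have no independent handle on, and your Koszul heuristic is not a computation. The paper again uses the Tate sequence: since $r_n(P)=r_n(Q)=0$ for $n\geq 2$ and $r_1=r_0$ for modules of projective dimension $\leq 1$, the long exact $\Tor$-sequences along its two halves collapse to
\[
r_1(X)-r_0(X) \;=\; r_3^{\cR}(\Z_p) - r_2^{\cR}(\Z_p) + r_1^{\cR}(\Z_p) - r_0^{\cR}(\Z_p) \;=\; s_3,
\]
where $r_n^{\cR}(\Z_p)=s_n+s_{n-1}$ comes from tensoring the explicit minimal $\Z_p[G]$-resolution of $\Z_p$ with $0 \to \Lambda \xrightarrow{T} \Lambda \to \Z_p \to 0$. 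No direct evaluation of $H_1(\cG,X)$ is needed.
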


\begin{rem}\label{rem:t=0}
We note that Leopoldt's conjecture for $k$ is equivalent to that the extension $M_{k, S}/k_{\infty}$ is finite though we do not assume it in Theorem \ref{thm:bound1}.
We get the equalities of the minimal numbers of generators in the following special cases.
\begin{itemize}
\item[(1)]
If Leopoldt's conjecture holds and $p \geq 3$, then we may take $K$ so that $K_{\infty} = M_{k, S}$.
In this case, we have $t = 0$, so the theorem says
\[
\gen_{\cR}(X_{M_{k, S}, S})
= \frac{s(s+1)}{2},
\]
where $s = \rank_p\Gal(M_{k, S}/k_{\infty})$.
\item[(2)]
In case $K_{\infty} = k_{\infty}$, we have $s = 0$, so the theorem says
\[
\gen_{\cR}(X_{k_{\infty}, S})
= t = \rank_p \Gal(M_{k, S}/k_{\infty}).
\]
Indeed, this follows directly from Lemma \ref{lem:Gab}.
\end{itemize}
Except for these cases, we have no theoretical method to determine the exact value of $\gen_{\cR}(X_{K_{\infty}, S})$ so far.
\end{rem}

\begin{rem}\label{rem:NumericalExamples}
In \S \ref{sec:examples} we give several numerical examples for $p=2$, $3$. 
We take $k=\Q$ and $K/\Q$ which is a real abelian extension such that $\Gal(K/\Q) \simeq (\Z/p\Z)^{\oplus s}$. 
Here, we pick up some typical examples from \S \ref{sec:examples}.
\begin{itemize}
\item[(1)] Take $p=3$. For two primes $\ell_i$ with $i=1$, $2$ such that $\ell_i \equiv 1$ (mod $3$), let $K$ be the unique $(\Z/3\Z)^{\oplus 2}$-extension over $\Q$ with conductor $\ell_1\ell_2$. 
Take $\ell_1=7$, and $\ell_2$ satisfying $\ell_2 \equiv 1$ (mod $9$), and 
consider $S=\{3, \ell_1, \ell_2, \infty\}$.
In this case $s=2$ and $t=1$. So Theorem \ref{thm:bound1} says that 
\[
3 \leq \gen_{\cR}(X_{K_{\infty}, S}) \leq 4.
\]
For $\ell_2$ less than 200, we have 
\[
\gen_{\cR}(X_{K_{\infty}, S}) =3 \ \ \mbox{if} \ \ \ell_2 = 19, 37, 73, 109, 163, 199
\]
and
\[
\gen_{\cR}(X_{K_{\infty}, S}) =4 \ \ \mbox{otherwise, namely if} \ \ \ell_2= 127, 181.
\]
Thus the above inequality is sharp in this case.
\item[(2)] Take $p=2$. Suppose that $\ell_1$, $\ell_2$, $\ell_3$ are three distinct primes such that $\ell_{i} \equiv 1$ (mod $4$). 
We take $K=\Q(\sqrt{\ell_1},\sqrt{\ell_2}, \sqrt{\ell_3})$ and $S=\{2, \ell_1, \ell_2, \ell_3, \infty\}$. 
Then $\Gal(K/\Q)\simeq (\Z/2\Z)^{\oplus 3}$, so $s=3$.
Taking account of the archimedean place, we know $t=1+3=4$.
Since $s(s+1)/2=6$, Theorem \ref{thm:bound1} says in this case 
\[
6 \leq \gen_{\cR}(X_{K_{\infty}, S}) \leq 10.
\]
Take $\ell_1=5$. 
For any $5< \ell_2 < \ell_3 \leq 100$, we have 
$
\gen_{\cR}(X_{K_{\infty}, S}) =7
$
except for 
\[
(\ell_2,\ell_3) = 
(17, 89), (37,41), (41,61), (41, 73), (41, 89), (53, 89), (73, 89), (89, 97),
\]
for which we have $\gen_{\cR}(X_{K_{\infty}, S}) =8$.
Also we have $\gen_{\cR}(X_{K_{\infty}, S}) =9$ for $(\ell_1,\ell_2,\ell_3)=(17,73,89)$, and
$\gen_{\cR}(X_{K_{\infty}, S}) =10$ for $(\ell_1,\ell_2,\ell_3)=(73,89,97)$. 
We do not have $\gen_{\cR}(X_{K_{\infty}, S}) =6$ at least in this range. 
\end{itemize}
\end{rem}

In this paper, we prove not only the above Theorem \ref{thm:bound1} but also its non-abelian generalization in Theorem \ref{thm:bound2}. 
We also study and determine the minimal numbers of generators and relations of the dual (Iwasawa adjoint) of $X_{K_{\infty}, S}$ (see Theorem \ref{thm:gen_dual}). 
This is relatively easier than Theorem \ref{thm:bound2}. 
Also, we give in \S \ref{ss:minusIM} some applications to the minus part of certain Iwasawa modules of CM-fields (see Corollary \ref{cor:bound4}), using Kummer duality. 

A key to the proof of our theorems is the existence of certain exact sequences, called Tate sequences. 
We remark here that Greither also used a different kind of Tate sequence in \cite{Grei13} to get information on the minimal numbers of generators of class groups of number fields. 
Our method of using the Tate sequences is totally different from Greither's.

This paper is organized as follows.
After algebraic preliminaries in \S \ref{sec:alg_prelim}, we will state the main results in \S \ref{sec:state}.
The proof is given in \S \S \ref{sec:Tate_arith}--\ref{sec:pf}.
Finally in \S \ref{sec:examples}, we will observe numerical examples.

\section*{Acknowledgments}

The authors would like to thank Yuta Nakamura, who computed $\gen_{\cR}(X_{K_{\infty}, S})$ for several examples in his master's thesis in a slightly different situation from ours.
They also thank Cornelius Greither heartily for his interest in the subject of this paper and for giving them some valuable comments.
The first and the second authors are supported by JSPS KAKENHI Grant Numbers 22K13898 and 22H01119, respectively.

\section{Algebraic preliminaries}\label{sec:alg_prelim}

\subsection{Minimal resolutions}\label{ss:general}

Let $R$ be a Noetherian local ring, which we do not assume to be commutative.
Let $\fm$ be the Jacobson radical of $R$, that is, $\fm$ is the maximal left (right) ideal of $R$.
For simplicity, let us assume that $\mathbf{k} := R/\fm$ is a commutative field.
We will often consider the case $R = \Z_p[[\cG]]$ for a pro-$p$ group $\cG$, in which case $R$ is indeed local and we have $\mathbf{k} = \F_p$ (see \cite[Proposition 5.2.16 (iii)]{NSW08}).

\begin{defn}
For a finitely generated (left) $R$-module $M$, we write $\gen_R(M)$ for the minimal number of generators of $M$ as an $R$-module.
Also, we write $r_R(M)$ for the minimal number of relations of $M$ as an $R$-module (see Definition \ref{defn:min_resol} below).
\end{defn}

\begin{rem}\label{rem:gen_pr}
The following observations will be often used.
\begin{itemize}
\item[(1)]
By Nakayama's lemma (e.g., \cite[Corollary 13.12]{Isa09}), we have
\[
\gen_R(M) = \gen_{R/I}((R/I) \otimes_R M)
\]
for any two-sided ideal $I \subset \fm$ of $R$.
In particular, we have
\[
\gen_R(M) = \gen_{\mathbf{k}}(\mathbf{k} \otimes_R M)
= \dim_{\mathbf{k}}(\mathbf{k} \otimes_R M).
\]
Therefore, for a finitely generated $\Z_p$-module $M$, we have
\[
\gen_{\Z_p}(M) = \dim_{\F_p}(\F_p \otimes_{\Z_p} M) = \rank_p(M).
\]
\item[(2)]
If we have an exact sequence
\[
0 \to M' \to M \to M'' \to 0
\]
of finitely generated $R$-modules, we have
\[
\gen_{R}(M'')
\leq \gen_{R}(M)
\leq \gen_{R}(M') + \gen_{R}(M'').
\]
The proof is standard.
\item[(3)]
In item (2) above, if we assume that $R$ is a discrete valuation ring (DVR), the formula is refined as
\[
\max \left\{ \gen_{R}(M'), \gen_{R}(M'') \right\}
\leq \gen_{R}(M)
\leq \gen_{R}(M') + \gen_{R}(M'').
\]
This follows from the structure theorem for finitely generated modules over principal ideal domains.
\end{itemize}
\end{rem}

\begin{eg}\label{eg:gen1}
Let us observe an example for which the formula in item (3) above does not hold when $R$ is not a DVR.
Let $R = \Z_p[[T]]$.
Consider $M = \Z_p[[T]]$ and its submodule
\[
M' = (p, T)^n = (p^n, p^{n-1}T, \dots, pT^{n-1}, T^n)
\]
with $n \geq 1$.
Then we have $\gen_{R}(M') = n + 1$ and $\gen_{R}(M) = 1$, so $\gen_{R}(M') \leq \gen_{R}(M)$ does not hold.
\end{eg}

Next we introduce the minimal resolutions of modules.

\begin{defn}\label{defn:min_resol}
Let $M$ be a finitely generated $R$-module.
We can construct an exact sequence of $R$-modules
\[
\cdots \to R^{r_2} \to R^{r_1} \to R^{r_0} \to M \to 0
\]
such that the image of each homomorphism $R^{r_{n+1}} \to R^{r_n}$ ($n \geq 0$) is contained in $\fm^{r_n}$.
Such a sequence is called a minimal resolution of $M$.
In this case, since $R^{r_{n+1}} \to R^{r_n}$ induces the zero map on $(R/\fm)^{r_{n+1}} \to (R/\fm)^{r_n}$, by the definition of the $\Tor$ functor, the integer $r_n$ coincides with
\[
r_n(M) = r_n^R(M) := \dim_{\mathbf{k}} \Tor_n^{R}(\mathbf{k}, M)
\]
for $n \geq 0$.
In particular, the integer $r_n$ is independent of the choice of minimal resolutions.
By definition we have 
\[
\gen_{R}(M)=r_0^R(M),
\qquad
r_R(M)=r_1^R(M).
\]
\end{defn}

\begin{lem}\label{lem:hom_r}
If $G$ is a finite $p$-group, we have
\[
r_n^{\Z_p[G]}(\Z_p) = \dim_{\F_p} H_n(G, \F_p).
\]
\end{lem}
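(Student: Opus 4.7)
The plan is as follows. Since $G$ is a finite $p$-group, the group ring $\Z_p[G]$ is local with residue field $\F_p$ (as noted in the text), so by Definition \ref{defn:min_resol} we have $r_n^{\Z_p[G]}(\Z_p) = \dim_{\F_p} \Tor_n^{\Z_p[G]}(\F_p, \Z_p)$. The task is therefore to identify this $\Tor$ group with the group homology $H_n(G, \F_p) = \Tor_n^{\Z[G]}(\Z, \F_p)$.

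The key idea is to compute both $\Tor$ groups from a common free resolution. First I would take any free resolution $Q_\bullet \to \Z$ over $\Z[G]$, for instance the bar resolution. Since $\Z_p$ is flat over $\Z$, the base-changed complex $Q_\bullet \otimes_\Z \Z_p \to \Z_p$ remains exact, and each term is a free $\Z_p[G]$-module via the identification $\Z[G] \otimes_\Z \Z_p = \Z_p[G]$. Hence $Q_\bullet \otimes_\Z \Z_p \to \Z_p$ is a free $\Z_p[G]$-resolution of $\Z_p$. Computing $\Tor$ through this resolution yields
\[
\Tor_n^{\Z_p[G]}(\F_p, \Z_p)
= H_n \bigl( \F_p \otimes_{\Z_p[G]} (Q_\bullet \otimes_\Z \Z_p) \bigr)
= H_n \bigl( \F_p \otimes_{\Z[G]} Q_\bullet \bigr)
= H_n(G, \F_p),
\]
where the middle equality uses the standard identification $\F_p \otimes_{\Z_p[G]} (\Z_p[G] \otimes_{\Z[G]} Q_n) = \F_p \otimes_{\Z[G]} Q_n$ coming from transitivity of tensor product.

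There is essentially no serious obstacle here; the only thing requiring care is tracking the base-change identifications at the level of differentials, not just of terms. As an alternative approach that stays entirely within $p$-local territory, one could instead take any free $\Z_p[G]$-resolution $P_\bullet \to \Z_p$ (for example a minimal one) and examine the short exact sequence of complexes $0 \to P_\bullet \xrightarrow{p} P_\bullet \to P_\bullet/pP_\bullet \to 0$; combined with $H_0(P_\bullet) = \Z_p$ and $H_i(P_\bullet) = 0$ for $i \geq 1$, the associated long exact homology sequence shows that $P_\bullet/pP_\bullet$ is a free $\F_p[G]$-resolution of $\F_p$. Then the termwise identification $\F_p \otimes_{\Z_p[G]} P_\bullet = \F_p \otimes_{\F_p[G]} (P_\bullet/pP_\bullet)$ delivers $\Tor_n^{\Z_p[G]}(\F_p, \Z_p) = \Tor_n^{\F_p[G]}(\F_p, \F_p) = H_n(G, \F_p)$.
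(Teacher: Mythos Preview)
Your proof is correct and follows essentially the same approach as the paper: both reduce to the identification $H_n(G,\F_p)\simeq \Tor_n^{\Z_p[G]}(\F_p,\Z_p)$ and then invoke the formula $r_n^{\Z_p[G]}(\Z_p)=\dim_{\F_p}\Tor_n^{\Z_p[G]}(\F_p,\Z_p)$ from Definition~\ref{defn:min_resol}. The paper simply asserts the base-change isomorphism (writing it as $H_n(G,\F_p)\simeq \Tor_n^{\Z_p[G]}(\Z_p,\F_p)\simeq \Tor_n^{\Z_p[G]}(\F_p,\Z_p)$), whereas you spell it out explicitly via a resolution.
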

\begin{proof}
This follows from $H_n(G, \F_p) \simeq \Tor_n^{\Z_p[G]}(\Z_p, \F_p) \simeq \Tor_n^{\Z_p[G]}(\F_p, \Z_p)$ and the formula in Definition \ref{defn:min_resol}.
\end{proof}

\subsection{Group homology}\label{ss:gp_hom}

In this subsection, we summarize facts about group homology.

Let $G$ be a finite group.
The following lemma is well-known.

\begin{lem}\label{lem:H3}
We have
\[
H_1(G, \Z) \simeq G^{\ab},
\]
the abelianization of $G$, and
\[
H_1(G, \Z/M\Z) \simeq G^{\ab}/M
\]
for $M \in \Z_{\geq 1}$.
\end{lem}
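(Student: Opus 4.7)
The plan is to treat the two isomorphisms in turn, the first being the classical identification of degree-one group homology with the abelianization, and the second following by a short long-exact-sequence argument from the first.

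For the first isomorphism $H_1(G,\Z)\simeq G^{\ab}$, I would invoke the augmentation short exact sequence
\[
0 \to I_G \to \Z[G] \to \Z \to 0,
\]
where $I_G$ denotes the augmentation ideal. Since $\Z[G]$ is a free $\Z[G]$-module, one has $H_n(G,\Z[G])=0$ for $n\geq 1$, so the long exact sequence in group homology collapses to
\[
0 \to H_1(G,\Z) \to (I_G)_G \to \Z[G]_G \to \Z_G \to 0,
\]
where $(-)_G$ denotes coinvariants. Because $(I_G)_G = I_G/I_G^2$ and the map $\Z[G]_G=\Z\to\Z_G=\Z$ is the identity, this yields $H_1(G,\Z)\simeq I_G/I_G^2$. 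I would then recall the standard isomorphism $I_G/I_G^2 \simeq G^{\ab}$ induced by sending $g-1 \mapsto g \bmod [G,G]$ for $g\in G$; its inverse is well defined because $(gh-1) - (g-1) - (h-1) = (g-1)(h-1) \in I_G^2$.

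For the second isomorphism, I would apply the long exact sequence in group homology to
\[
0 \to \Z \xrightarrow{M} \Z \to \Z/M\Z \to 0,
\]
obtaining
\[
H_1(G,\Z) \xrightarrow{M} H_1(G,\Z) \to H_1(G,\Z/M\Z) \to H_0(G,\Z) \xrightarrow{M} H_0(G,\Z).
\]
Since $H_0(G,\Z)=\Z$ and multiplication by $M$ on $\Z$ is injective, the connecting map vanishes, so $H_1(G,\Z/M\Z)\simeq H_1(G,\Z)/M \simeq G^{\ab}/M$ by the first part.

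There is no genuine obstacle here; the only subtlety to verify carefully is the identification $I_G/I_G^2\simeq G^{\ab}$, which amounts to checking that the map $g\mapsto g-1$ is well defined on $G^{\ab}$ modulo $I_G^2$ and that its inverse is a group homomorphism. Everything else is a routine application of the long exact sequence and the vanishing of positive-degree homology of a free module.
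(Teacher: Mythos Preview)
Your proof is correct and entirely standard. The paper itself does not supply a proof of this lemma; it simply introduces it with ``The following lemma is well-known'' and moves on, so there is nothing to compare against beyond noting that your augmentation-ideal argument for $H_1(G,\Z)\simeq G^{\ab}$ and the long exact sequence for the $\Z/M\Z$ coefficients are precisely the textbook justifications one would expect.
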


As for the second homology groups, if $G$ is abelian, it is known that $H_2(G, \Z)$ is isomorphic to $\bigwedge^2 G$ (see \cite[Chap.~V, Theorem 6.4 (iii)]{Bro82}).
If $G$ is not abelian, $H_2(G, \Z)$ is much harder to study, which is also known as the Schur multiplier of $G$ (cf.~\cite{Kar87}).

For now, we observe a relation between $H_n(G, \Z)$ and $H_n(G, \Z/M\Z)$ for a $p$-power $M$.

\begin{lem}\label{lem:H1}
Let $n \geq 2$.
For any $m \geq 1$, we have
\[
\rank_p H_n(G, \Z/p^m \Z) 
= \rank_p H_n(G, \Z) + \rank_p H_{n-1}(G, \Z).
\]
In particular, as the right hand side is independent from $m$, we have
\[
\rank_p H_n(G, \Z/p^m \Z) = \dim_{\F_p} H_n(G, \F_p).
\]
\end{lem}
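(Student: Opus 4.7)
The plan is to apply the universal coefficient theorem for group homology. By realizing $H_\bullet(G, -)$ as the homology of $P_\bullet \otimes_\Z -$ for a $\Z$-free resolution $P_\bullet$ of $\Z$ over $\Z[G]$, one obtains for any abelian group $M$ a short exact sequence (known to split, though not naturally)
\[
0 \to H_n(G, \Z) \otimes_\Z M \to H_n(G, M) \to \Tor_1^\Z(H_{n-1}(G, \Z), M) \to 0.
\]
I would then specialize to $M = \Z/p^m\Z$. The hypothesis $n \geq 2$ ensures that both $H_n(G, \Z)$ and $H_{n-1}(G, \Z)$ are finite abelian groups, since $G$ is finite and these are homology in degrees $\geq 1$; consequently the outer terms of the displayed sequence are finite abelian $p$-groups.

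Next I would handle the $p$-rank computation of the two outer terms by means of the elementary fact that, for any finite abelian group $A$ and any $m \geq 1$,
\[
\rank_p\bigl(A \otimes_\Z \Z/p^m\Z\bigr) = \rank_p(A) = \rank_p\bigl(A[p^m]\bigr),
\]
which reduces via the structure theorem to checking cyclic summands $\Z/p^a\Z$. Using the splitting of the universal coefficient sequence (so that $\rank_p$ is additive on it) and applying the display above to $A = H_n(G, \Z)$ and to $A = H_{n-1}(G, \Z)$ respectively yields the claimed identity. The ``in particular'' assertion is then immediate, since the right-hand side is independent of $m$ and the case $m = 1$ recovers $\dim_{\F_p} H_n(G, \F_p)$.

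The only subtle point is the restriction $n \geq 2$: it is essential because for $n = 1$ the group $H_{n-1}(G, \Z) = H_0(G, \Z) = \Z$ is not of $p$-power torsion, so that $\Tor_1^\Z(H_0(G, \Z), \Z/p^m\Z)$ vanishes instead of contributing a term of rank $\rank_p H_0(G, \Z)$. I do not anticipate any deeper obstacle; the argument is essentially a bookkeeping exercise once the universal coefficient sequence is in hand.
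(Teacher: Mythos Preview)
Your proposal is correct and follows essentially the same approach as the paper: the paper also invokes the split universal coefficient sequence
\[
0 \to H_n(G, \Z)\otimes \Z/p^m \Z \to H_n(G, \Z/p^m \Z) \to \Tor^{\Z}_1(H_{n-1}(G, \Z), \Z/p^m\Z) \to 0
\]
and deduces the claim immediately. Your write-up simply makes explicit the elementary rank computation on the outer terms and the role of the hypothesis $n \geq 2$, which the paper leaves implicit.
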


\begin{proof}
This follows from the universal coefficient theorem (see \cite[Chap.~I, Proposition 0.8]{Bro82}, for example), 
which says in our case that 
$$
0 \to H_n(G, \Z)\otimes  \Z/p^m \Z \to H_n(G, \Z/p^m \Z) \to {\rm Tor}^{\Z}_1(H_{n-1}(G, \Z), \Z/p^m) \to 0
$$
is split exact.
\end{proof}

In case $G$ is abelian, it is not hard to compute the $p$-rank of the $n$-th homology group:

\begin{lem}\label{lem:H2}
Suppose $G$ is abelian and put $s = \rank_p G$.
Then we have
\[
\dim_{\F_p} H_n(G, \F_p) = \frac{s(s+1) \cdots (s+n-1)}{n!}
\]
for $n \geq 0$ (when $n = 0$, the right hand side is understood to be $1$).
\end{lem}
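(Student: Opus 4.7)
The plan is to reduce to the case of cyclic $p$-groups and then apply the Künneth formula in group homology, assembling the answer combinatorially.

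First I would reduce to the case where $G$ is a finite abelian $p$-group. Write $G = G_p \times G_{p'}$, where $G_p$ is the Sylow $p$-subgroup and $G_{p'}$ has order prime to $p$. Since $|G_{p'}|$ annihilates $H_n(G_{p'}, \F_p)$ for $n \geq 1$ but acts as a unit on $\F_p$, the higher homology of $G_{p'}$ with coefficients in $\F_p$ vanishes. The Künneth formula in group homology (\cite[Chap.~V, Theorem 5.3]{Bro82}) then gives $H_n(G, \F_p) \simeq H_n(G_p, \F_p)$, and by definition $\rank_p G = \rank_p G_p = s$, so we may assume $G = G_p$.

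Next, decompose $G \simeq \bigoplus_{i=1}^{s} \Z/p^{a_i}\Z$ into cyclic factors. For a cyclic $p$-group $C = \Z/p^a\Z$, the standard periodic free resolution of $\Z$ over $\Z[C]$ yields
\[
H_j(C, \F_p) \simeq \F_p \quad \text{for all } j \geq 0.
\]
Iterating the Künneth formula (and noting that there are no Tor terms over the field $\F_p$), we obtain
\[
H_n(G, \F_p) \simeq \bigoplus_{i_1 + \cdots + i_s = n} H_{i_1}(\Z/p^{a_1}, \F_p) \otimes_{\F_p} \cdots \otimes_{\F_p} H_{i_s}(\Z/p^{a_s}, \F_p).
\]
Each summand is one-dimensional, so $\dim_{\F_p} H_n(G, \F_p)$ equals the number of $s$-tuples $(i_1, \ldots, i_s) \in \Z_{\geq 0}^{s}$ with $i_1 + \cdots + i_s = n$. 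A standard stars-and-bars count gives
\[
\dim_{\F_p} H_n(G, \F_p) = \binom{s+n-1}{n} = \frac{s(s+1)\cdots(s+n-1)}{n!},
\]
which matches the claim (and correctly returns $1$ at $n=0$).

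There is no real obstacle: the argument is entirely a bookkeeping exercise, with the only inputs being (i) the vanishing of $\F_p$-homology for groups of order prime to $p$, (ii) the computation of $H_*(\Z/p^a, \F_p)$ from the periodic resolution, and (iii) the Künneth formula over the field $\F_p$. The most delicate point, if any, is verifying that the Tor terms in Künneth vanish; but since the coefficients lie in a field this is automatic, so the calculation goes through cleanly.
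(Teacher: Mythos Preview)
Your proof is correct. The reduction to the $p$-Sylow subgroup, the computation $H_j(\Z/p^a,\F_p)\simeq\F_p$ from the periodic resolution, the iterated K\"unneth over the field $\F_p$, and the stars-and-bars count are all standard and assembled without gaps.

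The paper takes a different route. After the same reduction to a finite abelian $p$-group, it quotes an explicit minimal free resolution
\[
\cdots \to R^{s_3}\to R^{s_2}\to R^{s_1}\to R^{s_0}\to \Z_p\to 0,\qquad R=\Z_p[G],
\]
with $s_n=\binom{s+n-1}{n}$, constructed in \cite{GK15} and \cite{Kata_05}, and then invokes Lemma~\ref{lem:hom_r} to read off $\dim_{\F_p}H_n(G,\F_p)=s_n$. In spirit the two arguments are cousins---that explicit resolution is itself built as a tensor product of periodic resolutions of the cyclic factors---but the proofs as written are organized differently: the paper works at the level of resolutions and outsources the combinatorics to the cited construction, whereas you work directly at the level of homology via K\"unneth and do the count yourself. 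Your version is more self-contained for the purpose of this lemma; the paper's version has the side benefit that the explicit resolution is reused elsewhere (e.g., in the Fitting-ideal computations mentioned in \S\ref{sec:intro}).
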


\begin{proof}
By replacing $G$ by its $p$-Sylow subgroup, we may assume that $G$ is a $p$-group.
As in \cite[\S 1.2]{GK15} or \cite[\S 4.3]{Kata_05}, we can construct an explicit minimal free resolution of $\Z_p$ as an $R$-module
$$ \cdots \to R^{s_3} \to R^{s_2} \to R^{s_1} \to R^{s_0} \to \Z_p \to 0$$
with $s_{n}=s(s+1) \cdots (s+n-1)/n!$.
Thus, the lemma follows from Lemma \ref{lem:hom_r}.
\end{proof}

We also need the following duality theorem between 
the cohomology groups and the homology groups (see \cite[Chap VI Proposition 7.1]{Bro82}, for example). 

\begin{lem}\label{lem:HN1}
Let $G$ be a finite group and $M$ a (discrete) $G$-module.
We define its Pontryagin dual $M^{\vee}$ by $M^{\vee}=\Hom(M, \Q/\Z)$. 
Then for any $n \in \Z_{\geq 0}$, we have an isomorphism between 
$H^{n}(G, M)$ and $\Hom(H_{n}(G,M^{\vee}), \Q/\Z)$. 
\end{lem}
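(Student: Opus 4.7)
The plan is to combine a projective resolution of $\Z$ over $\Z[G]$ with the injectivity of $\Q/\Z$ as an abelian group and the Hom–tensor adjunction; this is essentially the strategy used in Brown's book, which the authors cite.

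First I would fix a projective resolution $P_\bullet \to \Z$ of the trivial $\Z[G]$-module $\Z$. By the standard definitions of group (co)homology as derived functors of $(-)^G$ and $(-)_G$, we have
\[
H^n(G, M) \simeq H^n\bigl(\Hom_{\Z[G]}(P_\bullet, M)\bigr), \qquad H_n(G, M^\vee) \simeq H_n\bigl(P_\bullet \otimes_{\Z[G]} M^\vee\bigr).
\]
Since $\Q/\Z$ is divisible, it is injective in the category of abelian groups, so $\Hom_\Z(-, \Q/\Z)$ is exact. Applying this functor to the chain complex $P_\bullet \otimes_{\Z[G]} M^\vee$ therefore commutes with taking homology, giving
\[
\Hom_\Z\bigl(H_n(G, M^\vee), \Q/\Z\bigr) \simeq H^n\bigl(\Hom_\Z(P_\bullet \otimes_{\Z[G]} M^\vee, \Q/\Z)\bigr).
\]

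Next, I would apply the Hom–tensor adjunction termwise: for each $n$ there is a natural isomorphism
\[
\Hom_\Z(P_n \otimes_{\Z[G]} M^\vee, \Q/\Z) \simeq \Hom_{\Z[G]}\bigl(P_n, \Hom_\Z(M^\vee, \Q/\Z)\bigr) = \Hom_{\Z[G]}(P_n, M^{\vee\vee}),
\]
and these isomorphisms are compatible with the differentials, so they assemble into an isomorphism of cochain complexes. Taking cohomology therefore yields $H^n(G, M^{\vee\vee})$. Combining this with the previous display and the canonical Pontryagin biduality $M \simeq M^{\vee\vee}$ delivers the claimed identification $H^n(G, M) \simeq \Hom_\Z(H_n(G, M^\vee), \Q/\Z)$.

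The only subtle point, which I would flag as the main obstacle, is the biduality step $M \simeq M^{\vee\vee}$. This is automatic when $M$ is finite, since every group homomorphism from the finite dual $M^\vee$ to $\Q/\Z$ is trivially continuous; but for general discrete $M$ one has to interpret $M^\vee$ as a compact abelian group and take continuous characters in the double dual (otherwise already $M = \Z$ gives $M^{\vee\vee} = \widehat{\Z} \neq \Z$). In all later applications in this paper the relevant $G$-modules $M$ are finite, so no difficulty arises; alternatively one may state and prove the cleaner unconditional identity $H^n(G, M^{\vee\vee}) \simeq \Hom_\Z(H_n(G, M^\vee), \Q/\Z)$ and then specialise. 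Every other step is a routine manipulation of derived functors.
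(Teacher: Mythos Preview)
Your argument is correct and is precisely the standard one: Hom--tensor adjunction combined with the exactness of $\Hom_{\Z}(-,\Q/\Z)$. The paper itself does not give a proof at all; it simply records the statement and refers to Brown, \emph{Cohomology of Groups}, Chap.~VI, Proposition~7.1, so there is no ``paper's proof'' to compare against beyond that citation, and your write-up is essentially the proof one finds there.

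One comment on the biduality issue you flag. Brown's formulation is actually $H^{n}(G,N^{\vee})\simeq H_{n}(G,N)^{\vee}$ for an arbitrary $G$-module $N$; this version needs no double dual. The paper's Lemma is obtained from that by substituting $N=M^{\vee}$ and then invoking $M\simeq M^{\vee\vee}$, so your caution is well placed. Note that the paper does apply the lemma once with $M=\Q_p/\Z_p$ (in the proof of Lemma~\ref{lem:coh1}), which is not finite; there the intended reading is the topological one, with $M^{\vee}=\Z_p$ compact and the double dual taken continuously, exactly as you suggest. Your proposed fix---prove the unconditional identity $H^{n}(G,M^{\vee\vee})\simeq H_{n}(G,M^{\vee})^{\vee}$ and specialise---is clean and handles both applications.
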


\section{The main results}\label{sec:state}

\subsection{Setting}\label{ss:set}

As in \S \ref{sec:intro}, let $p$ be any prime number, $k$ a totally real field, and $k_{\infty}$ its cyclotomic $\Z_p$-extension.
For a finite set $S$ of places of $k$ such that $S$ contains all the archimedean places and all $p$-adic places, we write $M_{k, S}$ for the maximal abelian pro-$p$-extension of $k$ unramified outside $S$.

Let $K_{\infty}/k$ be a pro-$p$ Galois extension of totally real fields such that $K_{\infty}$ contains $k_{\infty}$ and the extension $K_{\infty}/k_{\infty}$ is finite.
We do not assume that $K_{\infty}/k$ is abelian, but we have to assume the following.

\begin{ass}\label{ass:spl}
There exists an intermediate finite Galois extension $K/k$ of $K_{\infty}/k$ such that 
\[
K_{\infty} = k_{\infty} K,
\quad
k_{\infty} \cap K = k.
\]
In other words, the map induced by the restriction maps
\[
\Gal(K_{\infty}/k) \to \Gal(k_{\infty}/k) \times \Gal(K/k)
\]
 is an isomorphism.
\end{ass}

\begin{lem}\label{lem:ab_ass}
If $K_{\infty}/k$ is abelian, then Assumption \ref{ass:spl} holds.
\end{lem}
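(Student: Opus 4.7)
The plan is to translate the claim into a statement about the Galois group $\cG = \Gal(K_{\infty}/k)$ and then invoke the splitting of a standard short exact sequence. Writing $G = \Gal(K_{\infty}/k_{\infty})$ and $\Gamma = \Gal(k_{\infty}/k) \simeq \Z_p$, restriction gives the short exact sequence
\[
0 \to G \to \cG \to \Gamma \to 0,
\]
in which $G$ is finite by hypothesis. The desired subfield $K$ corresponds, via Galois theory, to a closed subgroup $H \subset \cG$ satisfying $H \cap G = 1$ and $H \cdot G = \cG$; under the abelian assumption, every such $H$ is automatically normal, so the associated $K/k$ is automatically Galois, and the quotient $\cG/H \simeq G$ is automatically finite.

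First I would observe that, since $\cG$ is abelian pro-$p$ with finite subgroup $G$ and quotient $\Z_p$, it is a finitely generated $\Z_p$-module. The displayed short exact sequence is then a sequence of $\Z_p$-modules whose right-hand term $\Z_p$ is free, hence projective. Therefore the sequence splits; equivalently, the structure theorem for finitely generated modules over the PID $\Z_p$ yields an isomorphism $\cG \simeq \Z_p \oplus T$ for some finite $p$-group $T$, and in this decomposition $G$ must contain $T$ and project to a finite subgroup of $\Z_p$, which is $0$, so the $\Z_p$-summand is a complement to $G$ up to finite index; iterating/correcting produces a genuine splitting.

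I would then take $H$ to be the image of any section $\Gamma \to \cG$, set $K := K_{\infty}^{H}$, and verify the three properties. The condition $H \cap G = 1$ translates into $k_{\infty} K = K_{\infty}$, the condition $H \cdot G = \cG$ translates into $k_{\infty} \cap K = k$, and finiteness of $[K:k] = |\cG/H| = |G|$ is immediate. Normality of $H$ (trivial since $\cG$ is abelian) gives that $K/k$ is Galois.

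There is really no serious obstacle here; the content of the lemma is just the projectivity of $\Z_p$ over itself. The only point requiring a little care is ensuring that the complement $H$ is a \emph{closed} subgroup of $\cG$ so that $K_{\infty}^{H}$ makes sense, but any $\Z_p$-submodule of a finitely generated $\Z_p$-module is automatically closed (being finitely generated, hence compact) in the $p$-adic topology on $\cG$.
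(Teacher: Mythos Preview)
Your proposal is correct and follows essentially the same route as the paper: both arguments use that the surjection $\cG \twoheadrightarrow \Gal(k_{\infty}/k) \simeq \Z_p$ is a map of $\Z_p$-modules onto a free module, hence admits a section, and take $K$ to be the fixed field of the image of that section (with normality of the image being automatic since $\cG$ is abelian). Your alternative paragraph via the structure theorem is unnecessary and slightly garbled---in fact $G$ coincides exactly with the torsion part $T$ of $\cG$, so the $\Z_p$-summand is already an exact complement, no ``iterating/correcting'' needed---but this does not affect the main argument.
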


\begin{proof}
We consider the restriction homomorphism 
$f: \Gal(K_{\infty}/k) \twoheadrightarrow \Gal(k_{\infty}/k)$.
Since $f$ is a homomorphism of $\Z_p$-modules and the target is free, 
$f$ has a section. 
We take a section and define $K$ to be the fixed field of the image of the section.
A point is that $K/k$ is then automatically Galois as $K_{\infty}/k$ is abelian.
\end{proof}

Set $\cG = \Gal(K_{\infty}/k)$ and $G = \Gal(K_{\infty}/k_{\infty})$.
We take an $S$ such that $K_{\infty}/k$ is unramified outside $S$.
Let $M_{K_{\infty}, S}/K_{\infty}$ be the Galois group of the maximal abelian pro-$p$-extension of $K_{\infty}$ that is unramified outside places lying above $S$, and 
$X_{K_{\infty}, S} = \Gal(M_{K_{\infty}, S}/K_{\infty})$ 
as in the Introduction.
Then it is known that $X_{K_{\infty}, S}$ is a finitely generated torsion module over the associated Iwasawa algebra $\cR = \Z_p[[\cG]]$.
Since $K_{\infty}/k$ is a pro-$p$ extension, the algebra $\cR$ is a local ring whose residue field is $\F_p$.

\subsection{The statements}\label{ss:state}

We use the notation in \S \ref{ss:set}.
To state the result, let us put
\[
s_n = \dim_{\F_p} H_n(G, \F_p)
\]
for $n \geq 0$ (recall $G = \Gal(K_{\infty}/k_{\infty})$).
For instance, we have $s_0 = 1$ and $s_1 = \rank_p G^{\ab}$ by Lemma \ref{lem:H3}.
Recall that Lemma \ref{lem:H2} tells us an explicit formula of $s_n$ in case $K_{\infty}/k_{\infty}$ is {\it abelian}; in particular, we have $s_2 = s(s+1)/2$ and $s_3 = s(s+1)(s+2)/6$ with $s = \rank_p G (= s_1)$.

The following is the main result, which contains a non-abelian generalization of Theorem \ref{thm:bound1}.

\begin{thm}\label{thm:bound2}
When Assumption \ref{ass:spl} is satisfied, the following inequalities and equalities hold.
\begin{itemize}
\item[(1)]
Put $t = \rank_p \Gal(M_{k, S}/M_{k, S} \cap K_{\infty})$.
Then we have
\[
\max \left\{ s_2, t \right\} 
\leq \gen_{\cR}(X_{K_{\infty}, S})
\leq s_2 + t.
\]
\item[(2)]
We have
\[
r_n(X_{K_{\infty}, S}) 
= s_{n+2} + s_{n+1}
\]
for $n \geq 2$ and
\[
r_1(X_{K_{\infty}, S}) - r_0(X_{K_{\infty}, S}) 
= r_{\cR}(X_{K_{\infty}, S}) - \gen_{\cR}(X_{K_{\infty}, S}) = s_3.
\]
\end{itemize}
\end{thm}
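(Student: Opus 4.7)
The plan is to compute the Tor invariants $r_n(X_{K_\infty,S}) = \dim_{\F_p}\Tor_n^{\cR}(\F_p, X_{K_\infty,S})$ by means of a Tate-type four-term exact sequence of finitely generated $\cR$-modules
\[
0 \to X_{K_\infty,S} \to A \to B \to \Z_p \to 0,
\]
whose construction (via class field theory and the weak Leopoldt-type input for totally real $k$) is the content of \S \ref{sec:Tate_arith}. Here $B$ will be $\cR$-free of an explicit finite rank, while $A$ will be built out of induced modules along decomposition subgroups at places of $S$; the key consequence is that $\Tor_n^{\cR}(\F_p, A) = \Tor_n^{\cR}(\F_p, B) = 0$ for all $n \geq 2$, while $\Tor_0$ and $\Tor_1$ of $A$ and $B$ admit explicit descriptions tied to the arithmetic of $k$.

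Splitting the Tate sequence into two short exact sequences $0 \to X_{K_\infty,S} \to A \to C \to 0$ and $0 \to C \to B \to \Z_p \to 0$ and chasing the long exact sequences of $\Tor_*^{\cR}(\F_p,-)$ yields, for every $n \geq 2$,
\[
\Tor_n^{\cR}(\F_p, X_{K_\infty,S}) \;\simeq\; \Tor_{n+2}^{\cR}(\F_p, \Z_p) \;=\; H_{n+2}(\cG,\F_p).
\]
Under Assumption \ref{ass:spl} one has $\cG \simeq \Gal(k_\infty/k)\times G$ with $\Gal(k_\infty/k)\simeq \Z_p$ of cohomological dimension one, and the K\"unneth formula then gives $\dim_{\F_p} H_m(\cG,\F_p) = s_m + s_{m-1}$ for $m \geq 1$, whence $r_n(X_{K_\infty,S}) = s_{n+2} + s_{n+1}$ for all $n \geq 2$, proving the first formula of (2). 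The identity $r_1(X_{K_\infty,S}) - r_0(X_{K_\infty,S}) = s_3$ will come from the same long exact sequences at low degrees: writing out the alternating $\F_p$-dimensions of $\Tor_0$ and $\Tor_1$ of the four terms in the Tate sequence, and using that $H_3(\cG,\F_p)$ injects into $\Tor_1^{\cR}(\F_p, X_{K_\infty,S})$ and that $\Tor_1^{\cR}(\F_p, C) \simeq H_2(\cG,\F_p)$, the various Tor-contributions of $A$ and $B$ telescope so that the difference collapses exactly to $s_3$.

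For part (1), the lower bound $\gen_\cR(X_{K_\infty,S}) \geq t$ is extracted by passing to $\cG$-coinvariants and invoking class field theory (in the spirit of Lemma \ref{lem:Gab} and Remark \ref{rem:t=0}(2)): the quotient $\F_p \otimes_\cR X_{K_\infty,S}$ surjects onto $\Gal(M_{k,S}/M_{k,S}\cap K_\infty)/p$, of $\F_p$-dimension $t$. The lower bound $\gen_\cR(X_{K_\infty,S}) \geq s_2$ comes from the degree-zero part of the above long exact sequence: the connecting map yields an injection $H_2(\cG,\F_p) \hookrightarrow \F_p \otimes_\cR X_{K_\infty,S}$, using the vanishing of $\Tor_2^{\cR}(\F_p, A)$. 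Finally, the upper bound $\gen_\cR(X_{K_\infty,S}) \leq s_2 + t$ will be proved by producing an explicit surjection $\cR^{s_2 + t} \twoheadrightarrow X_{K_\infty,S}$ through a careful choice of generators read off from the Tate sequence.

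The main obstacle is the construction of the Tate sequence with sufficiently sharp control on $A$ and $B$ in the non-abelian generality granted by Assumption \ref{ass:spl}; this is the technical core of \S \ref{sec:Tate_arith}. A secondary difficulty is the final bound in part (1): the two lower bounds $s_2$ and $t$ measure genuinely different phenomena---the group-theoretic complexity of $G$ and the arithmetic complexity of $k$, respectively---so it is not a priori clear why $\max\{s_2, t\}$ rather than $s_2 + t$ is the correct lower bound; and as the numerical data of Remark \ref{rem:NumericalExamples} show, both extremes of the inequality in (1) can in general be attained.
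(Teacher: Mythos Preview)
Your approach to part (2) is essentially the paper's: the Tate sequence of \S\ref{sec:Tate_arith} has middle terms of projective dimension $\leq 1$ over $\cR$ (not free, but that is all you need), and the Tor long exact sequences plus the K\"unneth-type identity $\dim_{\F_p}H_m(\cG,\F_p)=s_m+s_{m-1}$ give the formulas exactly as you describe.

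There is, however, a genuine gap in your argument for the lower bound $\gen_{\cR}(X_{K_\infty,S})\geq s_2$. You claim that the connecting maps produce an injection $H_2(\cG,\F_p)\hookrightarrow \F_p\otimes_{\cR}X_{K_\infty,S}$. This fails for two reasons. First, chasing the two long exact sequences, the composite $\Tor_2^{\cR}(\F_p,\Z_p)\to\Tor_1^{\cR}(\F_p,C)\to\Tor_0^{\cR}(\F_p,X_{K_\infty,S})$ need not be injective: the second map has kernel equal to the image of $\Tor_1^{\cR}(\F_p,A)$, and $A$ has projective dimension $1$, not $0$, so $\Tor_1^{\cR}(\F_p,A)\neq 0$ in general. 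Second, and more decisively, the injection \emph{cannot} hold: since $\dim_{\F_p}H_2(\cG,\F_p)=s_2+s_1$, it would force $\gen_{\cR}(X_{K_\infty,S})\geq s_2+s_1$, contradicting the upper bound $s_2+t$ whenever $t<s_1$ (e.g.\ the case $t=0$ of Remark~\ref{rem:t=0}(1), where $\gen_{\cR}(X_{K_\infty,S})=s_2$ exactly). In particular there is no Tate sequence with both middle terms free over $\cR$.

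The paper's proof of $\gen_{\cR}(X_{K_\infty,S})\geq s_2$ is genuinely different and more delicate. The point is that an injection of modules over a general local ring does \emph{not} bound generators from below (Example~\ref{eg:gen1}); one needs to be over a DVR (Remark~\ref{rem:gen_pr}(3)). The paper therefore specializes: choose an irreducible distinguished $f$ with $\cO_f=\Lambda/(f)$ a DVR and $f$ coprime to $\ch_\Lambda(P)\ch_\Lambda(Q)$, reduce the Tate sequence modulo $f$, and take $G$-homology. This produces (Proposition~\ref{prop:sp_hom}) an injection $H_2(G,\Z/p^m\Z)\hookrightarrow (X_{K_\infty,S}/f)_G$ of $\cO_f$-modules, where $m=\ord_p f(0)$; note this is homology of the \emph{finite} group $G$, of $\F_p$-dimension $s_2$, not of $\cG$. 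Now the DVR property gives $\gen_{\cO_f}((X_{K_\infty,S}/f)_G)\geq s_2$, and Nakayama identifies the left side with $\gen_{\cR}(X_{K_\infty,S})$. The upper bound and the bound $\gen\geq t$ are then obtained in \S\ref{sec:pf} from the Hochschild--Serre four-term sequence $0\to H_2(G,\Z)\to (X_{K_\infty,S})_G\to X_{k_\infty,S}\to G^{\ab}\to 0$, rather than directly from the Tate sequence.
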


It is easy to see that Theorem \ref{thm:bound2} implies Theorem \ref{thm:bound1}, thanks to Lemma \ref{lem:ab_ass}.

\vspace{5mm}

We also prove corresponding theorems concerning the dual (Iwasawa adjoint) of $X_{K_{\infty}, S}$. 
For a finitely generated torsion $\cR$-module $M$, we define 
the dual (Iwasawa adjoint) of $M$ by 
$$
M^{*} = \Ext^1_{\cR}(M, \cR).
$$
Put $\Lambda = \Z_p[[\Gal(K_{\infty}/K)]]$ by using Assumption \ref{ass:spl}, so $\cR$ is isomorphic to $\Lambda[G]$.
Then we have
$$
M^{*} \simeq \Ext^1_{\Lambda}(M, \Lambda)
$$
because $\Hom_{\cR}(N, \cR) \simeq \Hom_{\Lambda}(N, \Lambda)$ for any $\cR$-module $N$.
Therefore, our $M^{*}$ coincides with the Iwasawa adjoint of $M$ in \cite[Definition 5.5.5]{NSW08}, \cite[\S 5.1]{Iwa59}, \cite[\S 1.3]{Iwa73}. 

We are interested in the $\cR$-module $X_{K_{\infty}, S}^{*}$.
It is known that the structure of $X_{K_{\infty}, S}^{*}$ is often simpler than $X_{K_{\infty}, S}$ itself (e.g., when we are concerned with their Fitting ideals).
The following theorem implies that we encounter such a phenomenon when we are concerned with the minimal resolutions.

\begin{thm}\label{thm:gen_dual}
When Assumption \ref{ass:spl} is satisfied, the following equalities hold.
\begin{itemize}
\item[(1)]
We have
\[
\gen_{\cR}(X_{K_{\infty}, S}^{*})
= \rank_p \Gal(M_{k, S}/k_{\infty}).
\]
\item[(2)]
If $K_{\infty} \supsetneqq k_{\infty}$, then we have
\[
r_n(X_{K_{\infty}, S}^{*}) 
= s_{n-2} + s_{n-3}
\]
for $n \geq 3$,
\[
r_2(X_{K_{\infty}, S}^{*}) = s_0 + s_0 = 2,
\]
and
\[
r_1(X_{K_{\infty}, S}^{*}) - r_0(X_{K_{\infty}, S}^{*})
=r_{\cR}(X_{K_{\infty}, S}^{*}) - \gen_{\cR}(X_{K_{\infty}, S}^{*})
= s_0 = 1.
\]
If $K_{\infty} = k_{\infty}$, then we have $r_n(X_{K_{\infty}, S}^{*}) = 0$ for $n \geq 2$ and $r_1(X_{K_{\infty}, S}^{*}) - r_0(X_{K_{\infty}, S}^{*}) = 0$.
\end{itemize}
\end{thm}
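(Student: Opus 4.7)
The plan is to derive Theorem~\ref{thm:gen_dual} by combining the Tate sequence from Section~\ref{sec:Tate_arith} with Ext duality, comparing the resulting presentations of $X_{K_\infty,S}^{*}$ against the minimal free resolution of $\Z_p$ over $\cR$ (whose ranks, by Assumption~\ref{ass:spl} and a K\"{u}nneth decomposition via Lemmas~\ref{lem:hom_r} and~\ref{lem:H2}, are $r_n^{\cR}(\Z_p) = s_n + s_{n-1}$, with the convention $s_{-1}=0$). Applying $\RHom_{\cR}(-,\cR)$ to the Tate sequence $0 \to X_{K_\infty,S} \to B \to A \to \Z_p \to 0$ and using that both $X_{K_\infty,S}$ and $\Z_p$ are torsion over $\cR$ (so $\Hom_{\cR}(-,\cR)$ vanishes on them), dimension shifting yields isomorphisms $\Ext^{i}_{\cR}(X_{K_\infty,S},\cR) \simeq \Ext^{i+2}_{\cR}(\Z_p,\cR)$ in sufficiently high degree, together with a more intricate presentation of $X_{K_\infty,S}^{*} = \Ext^{1}_{\cR}(X_{K_\infty,S},\cR)$ at the low-degree boundary.

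From this, combined with an explicit construction of a minimal resolution of $X_{K_\infty,S}^{*}$, the higher ranks satisfy $r_n(X_{K_\infty,S}^{*}) = r_{n-2}^{\cR}(\Z_p) = s_{n-2} + s_{n-3}$ for $n \geq 3$, while the low-degree values $r_2 = 2$ and $r_1 - r_0 = 1$ emerge from the Tate-sequence-derived boundary. The degenerate case $K_\infty = k_\infty$ is treated separately: there $\cR = \Lambda$ is a regular local ring of dimension $2$, $X_{k_\infty,S}^{*}$ has projective dimension at most $1$, so $r_n(X^{*}) = 0$ for $n \geq 2$ and torsionness gives $r_1 = r_0$. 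For Part~(1), Nakayama's lemma yields $\gen_{\cR}(X_{K_\infty,S}^{*}) = \dim_{\F_p}(\F_p \otimes_{\cR} X_{K_\infty,S}^{*})$; applying $\F_p \otimes_{\cR}-$ to the above presentation and identifying the resulting cokernel with $\Gal(M_{k,S}/k_\infty) \otimes_{\Z} \F_p$ through global class field theory---essentially a descent argument relating the dualized Tate sequence over $\cR$ to its counterpart over $\Lambda = \Z_p[[\Gal(k_\infty/k)]]$, where $\gen_{\Lambda}(X_{k_\infty,S}) = \rank_p \Gal(M_{k,S}/k_\infty)$ by Remark~\ref{rem:t=0}(2)---yields the claimed equality.

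The main obstacle will be tying the abstract Ext computation back to the concrete Galois group $\Gal(M_{k,S}/k_\infty)$ in Part~(1): since the right-hand side depends only on $k$ and $S$, one must show that the upper layer $K_\infty/k_\infty$ contributes nothing to $\gen_{\cR}(X_{K_\infty,S}^{*})$, which is an intrinsically global statement. A secondary difficulty is tracking the low-degree transition in the minimal resolution of $X_{K_\infty,S}^{*}$: since the Tate sequence's middle modules are generally not free over $\cR$, the naive dualization produces a non-minimal presentation, and extracting the minimal ranks $r_0, r_1, r_2$ requires careful bookkeeping of the boundary terms.
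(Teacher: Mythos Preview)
Your overall strategy---dualize the Tate sequence, compare with the resolution of $\Z_p$, and descend to $\Lambda$ for Part~(1)---matches the paper's, but there is a real gap in your argument for Part~(2). The dimension-shifting isomorphisms $\Ext^i_{\cR}(X,\cR) \simeq \Ext^{i+2}_{\cR}(\Z_p,\cR)$ for $i \geq 2$ say nothing about $r_n(X^*) = \dim_{\F_p}\Tor_n^{\cR}(\F_p, X^*)$, which concerns only the single module $X^* = \Ext^1_{\cR}(X,\cR)$. What you actually have is the dual Tate sequence $0 \to \Z_p \to Q^* \to P^* \to X^* \to 0$, in which $P^*, Q^*$ have projective dimension $\leq 1$ but are not free; this does not produce a minimal resolution of $X^*$ in any direct way, and the ``explicit construction'' you allude to is precisely the missing step. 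The paper resolves this via Proposition~\ref{prop:stab}(2): one shows that $r_n(-)$ for $n \geq 2$ and the difference $r_1 - r_0$ are $\cP$-trivial Fitting invariants, so in any sequence $0 \to N \to P_1 \to P_2 \to M \to 0$ with $P_i \in \cP$ they depend only on $N$. This is not a routine long-exact-sequence computation---Remark~\ref{rem:resol_indep} explicitly notes that the needed direction cannot be checked by $\Tor$ sequences alone and instead invokes the theory of shifts of Fitting invariants from \cite{Kata_05}. Once this invariance is established, one compares the dual Tate sequence with the explicit sequence $0 \to \Z_p \to \Z_p[G]^{s_0} \to \Z_p[G]^{s_1} \to \Ker(d_1)^* \to 0$ obtained by dualizing the start of a minimal $\Z_p[G]$-resolution of $\Z_p$; splicing that with the remainder of the resolution (through the norm map, which lies in the radical exactly when $G$ is nontrivial) yields a minimal resolution of $\Ker(d_1)^*$ whose ranks one reads off directly.

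For Part~(1) your descent sketch is correct in spirit; the paper makes it precise as follows. The functoriality clause in Theorem~\ref{thm:Tate} (with $H = G$) shows that taking $G$-coinvariants of the dual Tate sequence over $\cR$ gives the dual Tate sequence over $\Lambda$, whence $(X_{K_\infty,S}^*)_G \simeq X_{k_\infty,S}^*$ and so $\gen_{\cR}(X_{K_\infty,S}^*) = \gen_\Lambda(X_{k_\infty,S}^*)$. The remaining step, which your outline leaves implicit, is that $\gen_\Lambda(M^*) = \gen_\Lambda(M)$ for any $\Lambda$-torsion module $M$ of projective dimension $\leq 1$ (dualize a square minimal presentation; this is Lemma~\ref{lem:dual_pd1}); one then finishes with $\gen_\Lambda(X_{k_\infty,S}) = \rank_p \Gal(M_{k,S}/k_\infty)$ from Lemma~\ref{lem:Gab}.
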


In \S \ref{sec:abst_Tate}, we will prove $s_2 \leq \gen_{\cR}(X_{K_{\infty, S}})$ in Theorem \ref{thm:bound2}(1), Theorem \ref{thm:bound2}(2), and Theorem \ref{thm:gen_dual}(2).
These parts follow only from the existence of the Tate sequence introduced in \S \ref{sec:Tate_arith}.
The rest of the statements ($t \leq \gen_{\cR}(X_{K_{\infty}, S}) \leq s_2 + t$ in Theorem \ref{thm:bound2}(1) and Theorem \ref{thm:gen_dual}(1)) will be proved in \S \ref{sec:pf}.

\subsection{Applications for the minus parts of Iwasawa modules for CM-extensions}\label{ss:minusIM}

In this subsection we apply the main theorems in the previous subsection to CM-extensions. 
We keep the notation in \S \ref{ss:set}, so $K_{\infty}/k$ is an extension of totally real fields satisfying Assumption \ref{ass:spl}.
{\it Only in this subsection} we assume that $p$ is odd, which is mainly for making the functor of taking the character component exact for characters of $\Gal(K_{\infty}(\mu_{p})/K_{\infty})$.

We consider the field ${\mathcal K}_{\infty}=K_{\infty}(\mu_{p})$ obtained by adjoining all $p$-th roots of unity to $K_{\infty}$. 
So ${\mathcal K}_{\infty}/k$ is a CM-extension. 
We also use an intermediate field ${\mathcal K}_{n}$ of the $\Z_p$-extension ${\mathcal K}_{\infty}/K(\mu_{p})$ 
such that $[{\mathcal K}_{n}:K(\mu_{p})]=p^n$ for each $n \geq 0$.
Let ${\mathcal L}_{n}$ be the maximal abelian pro-$p$-extension of ${\mathcal K}_{n}$ unramified {\it everywhere}.
So $\Gal({\mathcal L}_{n}/{\mathcal K}_{n})$ is isomorphic to the $p$-component $A_{{\mathcal K}_{n}}$ of the ideal class group of ${\mathcal K}_{n}$ by class field theory. 
We denote by $A_{{\mathcal K}_{\infty}}$ the inductive limit of $A_{{\mathcal K}_{n}}$, which is a discrete $\Z_p[[\Gal({\mathcal K}_{\infty}/k)]]$-module. 
We write ${\mathcal X}_{{\mathcal K}_{\infty}}$ for the projective limit of $A_{{\mathcal K}_{n}}$. 
Then ${\mathcal X}_{{\mathcal K}_{\infty}}$ is a compact $\Z_p[[\Gal({\mathcal K}_{\infty}/k)]]$-module.
Defining ${\mathcal L}_{\infty}$ to be the maximal abelian pro-$p$-extension of ${\mathcal K}_{\infty}$ unramified everywhere, we know that ${\mathcal X}_{{\mathcal K}_{\infty}}=\Gal({\mathcal L}_{\infty}/{\mathcal K}_{\infty})$.
Let $n_{0}$ be the smallest integer such that all  $p$-adic places are totally ramified in ${\mathcal K}_{\infty}/{\mathcal K}_{n_0}$. 
We define a submodule ${\mathcal Y}_{{\mathcal K}_{\infty}}$ of ${\mathcal X}_{{\mathcal K}_{\infty}}$ by 
${\mathcal Y}_{{\mathcal K}_{\infty}}=\Gal({\mathcal L}_{\infty}/{\mathcal L}_{n_0}{\mathcal K}_{\infty})$.

Put $\Delta=\Gal({\mathcal K}_{\infty}/K_{\infty})$, which is of order prime to $p$ by our assumption $p \neq 2$ 
in this subsection. 
Therefore, since $\Gal({\mathcal K}_{\infty}/k)\simeq \cG \times \Delta$, 
any $\Z_p[[\Gal({\mathcal K}_{\infty}/k)]]$-module $M$ is decomposed into 
$M=\bigoplus_{\chi} M^{\chi}$ where 
$\chi$ runs over all characters of $\Delta$ with values in $\Z_p^{\times}$, and  $M^{\chi}$ is the $\chi$-component of $M$ defined by 
$$M^{\chi}=\{x \in M \mid \sigma(x)=\chi(\sigma)x \ \mbox{for any} \ \sigma \in \Delta\}.$$
Note that each $M^{\chi}$ is an $\cR$-module. 
Let $\omega: \Delta \to \Z_p^{\times}$ be the Teichm\"{u}ller character, giving the action on $\mu_{p}$. 
Using our main results in \S \ref{ss:state}, we study $A_{{\mathcal K}_{\infty}}^{\omega}$ and ${\mathcal Y}_{{\mathcal K}_{\infty}}^{\omega}$.
Note that $\omega$ is an odd character, so the complex conjugation acts on these modules as $-1$.

Let $S_{p}$ be the set of all $p$-adic places and all archimedean places. 
Recall that we write $(-)^{\vee}$ for the Pontryagin dual.
By Kummer pairing (see \cite[Theorem 11.4.3]{NSW08} or \cite[Proposition 13.32]{Was97}), we have an isomorphism 
$$
(A_{{\mathcal K}_{\infty}}^{\omega})^{\vee}(1) \simeq X_{K_{\infty},S_{p}},
$$
where $(1)$ is Tate twist.
Also, by \cite[Theorem 11.1.8]{NSW08} we have 
$$
(A_{{\mathcal K}_{\infty}})^{\vee}
 \simeq {\mathcal Y}_{{\mathcal K}_{\infty}}^{*}.
$$
Therefore, we have 
$$X_{K_{\infty},S_{p}}^{*}(1) 
\simeq 
(A_{{\mathcal K}_{\infty}}^{\omega})^{\vee}(1)^*(1)
\simeq 
\left((A_{{\mathcal K}_{\infty}}^{\omega})^{\vee}\right)^*
\simeq 
(({\mathcal Y}_{{\mathcal K}_{\infty}}^{\omega})^{*})^{*}.
$$
For any finitely generated torsion $\cR$-module $M$ which has no nontrivial finite submodule, 
we know $(M^{*})^{*} \simeq M$ (see, for example, \cite[Proposition 5.5.8 (iv)]{NSW08}). 
Since ${\mathcal X}_{{\mathcal K}_{\infty},S_{p}}$ has no nontrivial finite submodule, so does ${\mathcal Y}_{{\mathcal K}_{\infty}}^{\omega}$.
Therefore, it follows from the previous isomorphism that 
$$X_{K_{\infty},S_{p}}^{*} \simeq {\mathcal Y}_{{\mathcal K}_{\infty}}^{\omega}(-1)$$
is an isomorphism. 

Thus, from Theorems \ref{thm:bound2} and \ref{thm:gen_dual} we get

\begin{cor}\label{cor:bound4}
In Theorem \ref{thm:bound2}, we further assume that $p >2$ and $S = S_p$ (so $K_{\infty}/k_{\infty}$ is unramified outside $p$). 
\begin{itemize}
\item[(1)] 
Then we have 
$$\max \left\{ s_2, t \right\} 
\leq \gen_{\cR}((A_{{\mathcal K}_{\infty}}^{\omega})^{\vee})
\leq s_2 + t$$ 
and 
$$
r_{\cR}((A_{{\mathcal K}_{\infty}}^{\omega})^{\vee}) = \gen_{\cR}((A_{{\mathcal K}_{\infty}}^{\omega})^{\vee}) + s_3.
$$
\item[(2)]
Put $t'=\rank_p \Gal(M_{k, S}/k_{\infty})$.
Then we have 
$\gen_{\cR}({\mathcal Y}_{{\mathcal K}_{\infty}}^{\omega})
= t'$ and 
$$
r_{\cR}({\mathcal Y}_{{\mathcal K}_{\infty}}^{\omega}) = 
\left\{
\begin{array}{ll}
t'+1 & \ \mbox{if $K_{\infty} \supsetneqq k_{\infty}$} \\
t' & \ \mbox{if $K_{\infty} = k_{\infty}$.}
\end{array}\right.
$$
 \end{itemize}
\end{cor}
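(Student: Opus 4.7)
The plan is to deduce Corollary \ref{cor:bound4} from Theorems \ref{thm:bound2} and \ref{thm:gen_dual} applied with $S = S_p$, combined with the two isomorphisms
\[
(A_{{\mathcal K}_{\infty}}^{\omega})^{\vee}(1) \simeq X_{K_{\infty},S_{p}}, \qquad X_{K_{\infty},S_{p}}^{*} \simeq {\mathcal Y}_{{\mathcal K}_{\infty}}^{\omega}(-1)
\]
that are derived in the discussion preceding the corollary. The corollary's hypothesis that $K_{\infty}/k_{\infty}$ is unramified outside $p$ guarantees that $K_{\infty}/k$ itself is unramified outside $S_p$, so the hypotheses of both theorems apply to $X_{K_{\infty},S_p}$. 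One checks directly that the invariants $t$ and $t'$ in the statement of the corollary are precisely those appearing in Theorem \ref{thm:bound2}(1) and Theorem \ref{thm:gen_dual}(1) respectively.

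The only non-formal ingredient is a Tate-twist invariance statement: for every $j \in \Z$ and every finitely generated $\cR$-module $M$,
\[
\gen_{\cR}(M(j)) = \gen_{\cR}(M) \quad \text{and} \quad r_n^{\cR}(M(j)) = r_n^{\cR}(M) \text{ for all } n \geq 0.
\]
To establish this, observe that the map $\alpha_j \colon \cR \to \cR$ determined on group elements by $\sigma \mapsto \kappa(\sigma)^{j}\sigma$ is a ring automorphism of $\cR$ — multiplicativity uses that $\kappa$ is a group homomorphism to $\Z_p^\times$, which lies in the center of $\cR$ — and that $M(j)$ is precisely the restriction of scalars of $M$ along $\alpha_{-j}$. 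Since $\cG$ is pro-$p$ and $p$ is odd, $\kappa$ takes values in $1+p\Z_p$, so $\alpha_j$ descends to the identity on $\cR/\fm \simeq \F_p$. Therefore $\alpha_j$ carries any minimal free resolution of $M$ to a minimal free resolution of $M(j)$ with the same ranks at every stage.

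With this invariance in hand, part (1) is immediate: $(A_{{\mathcal K}_{\infty}}^{\omega})^{\vee} \simeq X_{K_{\infty},S_p}(-1)$ inherits the bounds $\max\{s_2,t\} \leq \gen_{\cR} \leq s_2 + t$ and the identity $r_{\cR} = \gen_{\cR} + s_3$ from Theorem \ref{thm:bound2}. Similarly, ${\mathcal Y}_{{\mathcal K}_{\infty}}^{\omega} \simeq X_{K_{\infty},S_p}^{*}(1)$ inherits from Theorem \ref{thm:gen_dual} both $\gen_{\cR}=\rank_p\Gal(M_{k,S_p}/k_\infty)=t'$ and the relation $r_{\cR}-\gen_{\cR}=s_0=1$ or $0$, according as $K_\infty \supsetneqq k_\infty$ or $K_\infty = k_\infty$, yielding part (2).

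The main obstacle, such as it is, is verifying the Tate-twist invariance of $\gen_{\cR}$ and $r_n^{\cR}$; once this minor bookkeeping is done, the substantive content is entirely packaged in the two earlier theorems and in the Kummer-duality isomorphisms already in place.
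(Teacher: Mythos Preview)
Your proof is correct and follows exactly the paper's approach: the paper simply writes ``Thus, from Theorems \ref{thm:bound2} and \ref{thm:gen_dual} we get'' the corollary, relying on the two Kummer-duality isomorphisms established just before the statement. The only difference is that you make explicit the Tate-twist invariance of $\gen_{\cR}$ and $r_n^{\cR}$ (via the ring automorphism $\alpha_j$), which the paper leaves implicit; your justification is sound, though note that any ring automorphism of a local ring automatically preserves the Jacobson radical, so the observation that $\kappa(\cG)\subset 1+p\Z_p$ is true but not strictly needed.
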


\section{The Tate sequence}\label{sec:Tate_arith}

A key ingredient to prove Theorems \ref{thm:bound2} and \ref{thm:gen_dual} is an exact sequence that $X_{K_{\infty}, S}$ satisfies, which is often called the Tate sequence.
Indeed, as noted in the final paragraph of \S \ref{ss:state}, parts of main theorems can be deduced from the existence of the Tate sequence only.
On the other hand, the other parts require additional arithmetic study that we will do in \S \ref{sec:pf}.
The Tate sequence also played a key role in computing the Fitting ideal of $X_{K_{\infty}, S}$ in the work \cite{GK15}, \cite{GK17}, and \cite{GKT20} that we mentioned in \S \ref{sec:intro}.

In order to prove the main theorems, we need the Tate sequence of the following type.

\begin{thm}\label{thm:Tate}
There exists an exact sequence of $\cR$-modules
\[
0 \to X_{K_{\infty}, S} \to P \overset{\phi}{\to} Q \to \Z_p \to 0,
\]
where $P$ and $Q$ are finitely generated torsion $\cR$-modules whose projective dimensions are $\leq 1$.
Moreover, this sequence is functorial when $K_{\infty}$ varies.
More precisely, for a finite normal subgroup $H$ of $\Gal(K_{\infty}/k)$, we have an exact sequence 
\[
0 \to X_{K_{\infty}^H, S} \to P_H \overset{\phi_H}{\to} Q_H \to \Z_p \to 0
\]
over the Iwasawa algebra $\Z_p[[\Gal(K_{\infty}^H/k)]]$,
where $P_H$, $Q_H$ denote the $H$-coinvariant modules, and $\phi_H$ the homomorphism induced by $\phi$.
\end{thm}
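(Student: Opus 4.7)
The plan is to construct the Tate sequence from an arithmetic complex over $\cR$ coming from Galois cohomology, extract the 4-term sequence as the kernel/cokernel of a 2-term representative, and deduce functoriality from naturality of the construction in the tower. Concretely, I would consider
\[
C^\bullet := \RHom_{\Z_p}\bigl(\RG(G_{K_\infty, S},\, \Q_p/\Z_p),\ \Z_p\bigr)[-1]
\]
in the derived category of $\cR$-modules, where $G_{K_\infty, S}$ denotes the Galois group over $K_\infty$ of the maximal pro-$p$ extension unramified outside (places above) $S$.  By the weak Leopoldt conjecture, known for totally real $K_\infty$ by Iwasawa, $H^2(G_{K_\infty, S}, \Q_p/\Z_p) = 0$; together with $H^0 = \Q_p/\Z_p$ and the class-field-theoretic identification $H^1 = X_{K_\infty, S}^{\vee}$, Pontryagin duality together with the shift places $C^\bullet$ in degrees $0, 1$ with $H^0(C^\bullet) = X_{K_\infty, S}$ and $H^1(C^\bullet) = \Z_p$.

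Next, using the bounded $p$-cohomological dimension of $G_{K_\infty, S}$ and the torsion nature of the cohomology, I would represent $C^\bullet$ by a 2-term complex $[P \xrightarrow{\phi} Q]$ concentrated in degrees $0, 1$, with $P$ and $Q$ finitely generated torsion $\cR$-modules of $\pd_\cR \leq 1$.  If $[F_0 \to F_1 \to F_2]$ is a finite-free 3-term resolution with $F_0 \hookrightarrow F_1$, one sets $P := F_1/F_0$ (which has $\pd_\cR \leq 1$) and then adjusts $Q$ together with the surjection $Q \to \Z_p$ so that both $P$ and $Q$ become $\cR$-torsion without altering the cohomology of the complex.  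Displaying $\Ker(\phi) = X_{K_\infty, S}$ and $\Cok(\phi) = \Z_p$ then yields the desired 4-term exact sequence.

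For the functoriality, the construction of $C^\bullet$ is natural in $K_\infty$: for a finite normal subgroup $H \subset \cG$, the analogous complex $C^\bullet_{K_\infty^H}$ is obtained from $C^\bullet$ by derived base change along $\cR \twoheadrightarrow \Z_p[[\cG/H]]$, by standard Galois-cohomology base change for the tower.  Choosing the 2-term representative compatibly with this base change produces $[P_H \xrightarrow{\phi_H} Q_H]$ as the representative for $C^\bullet_{K_\infty^H}$, and extracting kernel and cokernel recovers the Tate sequence for $K_\infty^H/k$.

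The main obstacle I expect is the requirement that $P$ and $Q$ be \emph{simultaneously} $\cR$-torsion and of $\pd_\cR \leq 1$, whereas a naive representative of a perfect complex would use finite-free modules (hence non-torsion).  This forces the ``bookkeeping'' adjustment sketched above, and the subtle point is to arrange the adjustment to be compatible with the base change $\cR \to \Z_p[[\cG/H]]$ so that taking $H$-coinvariants yields exactly the sequence for $K_\infty^H$ without picking up extraneous $\Tor$-contributions --- a verification ultimately anchored in the naturality of the Galois-cohomological construction in the Iwasawa tower.
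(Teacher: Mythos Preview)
Your overall strategy matches the paper's: start from a perfect complex computing $X_{K_\infty,S}$ and $\Z_p$ as its only cohomology (the paper uses $\RG_c(\cO_{K_\infty,S},\Z_p(1))$ from Burns--Flach, which is the same object up to duality and shift, and has the advantage of being known to behave well at $p=2$), pass to a three-term free representative $[C^0\hookrightarrow C^1\to C^2]$, and read off the four-term sequence from $0\to X_{K_\infty,S}\to C^1/C^0\to C^2\to\Z_p\to 0$.

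The gap is exactly the step you flag as the ``main obstacle'', and your sketch does not resolve it. Setting $P:=F_1/F_0$ gives a module of $\pd_\cR\le 1$ but it is \emph{not} torsion (it surjects onto the free module $\Imag\Phi\subset F_2$), and no adjustment of $Q$ alone can change that. The paper's device is concrete and worth knowing: choose a non-zero-divisor $f$ in the \emph{center} of $\cR$ annihilating $\Z_p$ (e.g.\ $\gamma-1$ for a topological generator $\gamma$ of the central $\Z_p$-part of $\cG$). Then $\Imag\Phi\supset fC^2$, so by projectivity of $C^2$ one lifts $f\cdot\id_{C^2}$ through $\Phi$ to an injection $\iota:C^2\hookrightarrow C^1/C^0$, and sets
\[
P:=\Cok(\iota),\qquad Q:=C^2/fC^2.
\]
Both are visibly of $\pd_\cR\le 1$; $Q$ is killed by $f$, and $P$ is then torsion because it sits in $0\to X_{K_\infty,S}\to P\to Q\to\Z_p\to 0$ with the other three terms torsion. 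This also dissolves your worry about functoriality: $f$ is chosen once (from the base $k$) and is central, so applying $(-)_H$ to the diagram defining $\iota$ commutes with the construction, and the descended sequence is the Tate sequence for $K_\infty^H$ by the naturality of $\RG_c$.
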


\begin{proof}
For an intermediate field $F$ of $K_{\infty}/k$ with $[F:k]<\infty$, 
we use a perfect complex $\RG_{c}({\mathcal O}_{F,S}, \Z_p(1))$ 
in Burns--Flach \cite[Proposition 1.20]{BF96}. 
Note that this complex works well even for $p=2$ (here, we use our assumption that $S$ contains all archimedean places).
Taking the project limit, we get a perfect complex 
$C^{\bullet}=\RG_{c}({\mathcal O}_{K_{\infty},S}, \Z_p(1))$ 
which is quasi-isomorphic to a complex of the form $[C^0 \overset{d^0}{\to} C^1 \overset{d^1}{\to} C^2]$ concentrated on degrees $0$, $1$, $2$ with $C^i$ finitely generated projective over $\cR$ and
whose cohomology groups are 
$$H^1(C^{\bullet})=H^1(O_{K_{\infty},S}, \Q_p/\Z_p)^{\vee}=X_{K_{\infty},S}, \ \ 
H^2(C^{\bullet})=H^0(O_{K_{\infty},S}, \Q_p/\Z_p)^{\vee}=\Z_p,
$$
(see \cite[page 86, line 6]{BF96}) and $H^i(C^{\bullet})=0$ for $i \neq 1$, $2$,
where we used the weak Leopoldt conjecture which is proven in this case (see \cite[Theorem 10.3.25]{NSW08}).
By the definition of cohomology groups, we have an exact sequence
\[
0 \to X_{K_{\infty},S} \to C^1/C^0 \overset{\Phi}{\to} C^2 \to \Z_p \to 0,
\]
where we regard $C^0$ as a submodule of $C^1$ via $d^0$ and $\Phi$ is induced by $d^1$.
Take a non-zero-divisor $f$ in the center of $\cR$ that annihilates $\Z_p$.
Then the image of $\Phi$ contains $f C^2$, so by the projectivity of $C^2$ we can construct a commutative diagram of $\cR$-modules
\[
\xymatrix{
	C^2 \ar@{=}[r] \ar@{^(->}[d]
	& C^2 \ar@{^(->}[d]^{f \times}\\
	C^1/C^0 \ar[r]_-{\Phi}
	& C^2.
}
\]
Then defining $P$ and $Q$ as the cokernel of these vertical maps respectively, we obtain the Tate sequence as claimed.
The functoriality follows from that of $\RG_{c}({\mathcal O}_{K_{\infty},S}, \Z_p(1))$.
\end{proof} 

\section{Abstract Tate sequences}\label{sec:abst_Tate}

Set $\Lambda = \Z_p[[T]]$.
Let $G$ be a (not necessarily abelian) finite $p$-group.

Motivated by Theorem \ref{thm:Tate}, we study a $\Lambda[G]$-module $X$ that satisfies an abstract Tate sequence, that is:

\begin{setting}\label{set:Tate}
There exists an exact sequence of $\Lambda[G]$-modules
\begin{equation}\label{eq:Tate}
0 \to X \to P \overset{\phi}{\to} Q \to \Z_p \to 0,
\end{equation}
where both $P$ and $Q$ are finitely generated torsion $\Lambda[G]$-modules whose projective dimensions are $\leq 1$.
\end{setting}

In this section, we show that the existence of a Tate sequence gives a severe constraint on the integers $\gen_{\Lambda[G]}(X) = r_0(X)$, $r_{\Lambda[G]}(X) = r_1(X)$, and $r_n(X)$ ($n \geq 2$).

\subsection{The statements}\label{ss:state_abst}

To state the result, let us define
\[
s_n = \dim_{\F_p} H_n(G, \F_p)
\]
for $n \geq 0$.

The following are the main theorems in this section.
As noted in the final paragraph of \S \ref{ss:state}, those are enough to show parts of Theorems \ref{thm:bound2} and \ref{thm:gen_dual}.

\begin{thm}\label{thm:min_resol}
Let $X$ be a $\Lambda[G]$-module that satisfies a Tate sequence as in Setting \ref{set:Tate}.
\begin{itemize}
\item[(1)]
We have $\gen_{\Lambda[G]}(X) \geq s_2$.
\item[(2)]
We have
\[
r_n(X) 
= s_{n+2} + s_{n+1}
\]
for $n \geq 2$ and
\[
r_1(X) - r_0(X) 
= s_3.
\]
\end{itemize}
\end{thm}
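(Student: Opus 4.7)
The plan is to split the Tate sequence into two short exact sequences
\[
0 \to X \to P \to Y \to 0 \quad\text{and}\quad 0 \to Y \to Q \to \Z_p \to 0,
\]
where $Y := \Imag(\phi)$, and then to exploit the resulting long exact sequences of $\Tor_\ast^{\Lambda[G]}(\F_p, -)$. Since $\pd P \leq 1$ and $\pd Q \leq 1$, we have $\Tor_n^{\Lambda[G]}(\F_p, P) = \Tor_n^{\Lambda[G]}(\F_p, Q) = 0$ for $n \geq 2$, which makes large portions of these long exact sequences collapse into isomorphisms.

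As a preliminary, I would compute $z_n := \dim_{\F_p}\Tor_n^{\Lambda[G]}(\F_p, \Z_p)$. Starting from the minimal $\Z_p[G]$-resolution of $\Z_p$ (whose ranks are $s_n$, by Lemma \ref{lem:hom_r}), base-changing to the $\Z_p$-flat algebra $\Lambda[G]$ to obtain a minimal $\Lambda[G]$-resolution of $\Lambda$, and then taking the mapping cone of multiplication by $T$, I would obtain a minimal $\Lambda[G]$-resolution of $\Z_p$ with Betti numbers
\[
z_n = s_n + s_{n-1},
\]
where we set $s_{-1} := 0$; minimality is preserved since both $T$ and the differentials of the $\Z_p[G]$-resolution lie in the Jacobson radical.

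For part (2) with $n \geq 2$, the collapsed long exact sequences give
\[
\Tor_n^{\Lambda[G]}(\F_p, X) \simeq \Tor_{n+1}^{\Lambda[G]}(\F_p, Y) \simeq \Tor_{n+2}^{\Lambda[G]}(\F_p, \Z_p),
\]
hence $r_n(X) = z_{n+2} = s_{n+2} + s_{n+1}$. When $n = 1$, the sequence for $X, P, Y$ only produces an injection $\Tor_2(\F_p, Y) \hookrightarrow \Tor_1(\F_p, X)$, but combined with the isomorphism $\Tor_2(\F_p, Y) \simeq \Tor_3(\F_p, \Z_p)$ coming from the other sequence this yields the crucial lower bound $r_1(X) \geq z_3 = s_3 + s_2$.

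The final ingredient is an Euler characteristic argument. Each of the two short exact sequences truncates into a $7$-term exact sequence of finite-dimensional $\F_p$-vector spaces (from $\Tor_2$ of the right-hand module down to $\F_p \otimes$ of the right-hand module), whose alternating sum of dimensions vanishes. A key observation here is that $r_0(P) = r_1(P)$ and $r_0(Q) = r_1(Q)$: since $P, Q$ are $\Lambda$-torsion with $\pd \leq 1$, tensoring their minimal projective resolutions with $Q(\Lambda)$ produces an isomorphism of free $Q(\Lambda)[G]$-modules, and $Q(\Lambda)[G]$ satisfies the invariant basis number property as a finite-dimensional algebra over a field. Plugging this in and eliminating the dimensions $y_n := \dim_{\F_p}\Tor_n^{\Lambda[G]}(\F_p, Y)$ from the two equations using $y_2 = z_3$, I would obtain
\[
r_1(X) - r_0(X) = z_3 - z_2 + z_1 - z_0 = (s_3+s_2)-(s_2+s_1)+(s_1+1)-1 = s_3,
\]
which combined with $r_1(X) \geq s_3 + s_2$ yields $r_0(X) \geq s_2$, establishing (1). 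The main obstacle I anticipate is the bookkeeping in the Euler characteristic calculation together with the technical verification that $\Lambda$-torsion modules of $\pd \leq 1$ over $\Lambda[G]$ have equal $r_0$ and $r_1$; everything else should be essentially formal.
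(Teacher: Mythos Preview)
Your argument is correct. For part (2) it coincides with the paper's proof: the paper likewise computes $r_n^{\Lambda[G]}(\Z_p) = s_n + s_{n-1}$ (Lemma \ref{lem:resol_bc}), records $r_0(P) = r_1(P)$ and $r_n(P) = 0$ for $n \geq 2$ (Lemma \ref{lem:Fitt_inv1}), and then reads off the claim from the two long exact $\Tor$-sequences (Lemma \ref{lem:rn_rel}).

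For part (1), however, your route is genuinely different from the paper's. The paper does \emph{not} deduce $r_0(X) \geq s_2$ from the identity $r_1(X) - r_0(X) = s_3$; instead it introduces a specialization method (\S\ref{ss:spec}--\S\ref{ss:pf_spec}): one chooses a height-one prime $f \in \Lambda$ so that $\cO_f = \Lambda/(f)$ is a DVR and $f$ is coprime to $\ch_{\Lambda}(P)\ch_{\Lambda}(Q)$, and then a diagram chase with the $G$-homology of the reduced Tate sequence produces an injection $H_2(G, \Z/p^m\Z) \hookrightarrow (X/f)_G$ (Proposition \ref{prop:sp_hom}); since $\cO_f$ is a DVR this yields $\gen_{\Lambda[G]}(X) = \gen_{\cO_f}((X/f)_G) \geq s_2$. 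Your argument is more economical for the bare inequality: the injection $\Tor_3^{\Lambda[G]}(\F_p, \Z_p) \hookrightarrow \Tor_1^{\Lambda[G]}(\F_p, X)$ combined with the Euler-characteristic identity already gives $r_0(X) \geq (s_3 + s_2) - s_3 = s_2$, with no need to leave the ring $\Lambda[G]$. The paper's specialization, on the other hand, yields finer information (the four-term sequence of Proposition \ref{prop:sp_hom}) which is reused later to motivate the arithmetic bounds in \S\ref{sec:pf}; so each approach has its advantage.
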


Claims (1) and (2) will be proved respectively in \S \ref{ss:pf_spec} and \S \ref{ss:resol}.

For a $\Lambda[G]$-module $M$, we define its dual (Iwasawa adjoint) by
\[
M^{*} = \Ext^1_{\Lambda[G]}(M, \Lambda[G]) 
\simeq \Ext^1_{\Lambda}(M, \Lambda).
\]
The corresponding theorem for the dual is:

\begin{thm}\label{thm:min_resol2}
Let $X$ be a $\Lambda[G]$-module that satisfies a Tate sequence as in Setting \ref{set:Tate}.
\begin{itemize}
\item[(1)]
We have $\gen_{\Lambda[G]}(X^{*}) \geq s_1$.
\item[(2)]
If $G$ is non-trivial, then we have
\[
r_n(X^{*}) 
= s_{n-2} + s_{n-3}
\]
for $n \geq 3$,
\[
r_2(X^{*}) = s_0 + s_0 (= 2),
\]
and
\[
r_1(X^{*}) - r_0(X^{*})
= s_0 (= 1).
\]
If $G$ is trivial, then we have $r_n(X^{*}) = 0$ for $n \geq 2$ and $r_1(X^{*}) - r_0(X^{*}) = 0$.
\end{itemize}
\end{thm}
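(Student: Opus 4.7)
My plan is to mimic the proof of Theorem \ref{thm:min_resol} by first constructing dual exact sequences for $X^*$ from the Tate sequence, and then computing $\Tor^{\Lambda[G]}_*(\F_p, X^*)$ by chasing long exact sequences. The central observation is that $\Lambda[G]$ is a Frobenius $\Lambda$-algebra, so $\Ext^i_{\Lambda[G]}(-, \Lambda[G]) \simeq \Ext^i_\Lambda(-, \Lambda)$ as $\Lambda[G]$-modules, which already appears in the definition of $(-)^*$.

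First I split the Tate sequence at $Y := \Imag(\phi)$ into $0 \to X \to P \to Y \to 0$ and $0 \to Y \to Q \to \Z_p \to 0$, and apply $\Ext^\bullet_\Lambda(-, \Lambda)$. Using $\Hom_\Lambda(M, \Lambda) = 0$ for $\Lambda$-torsion $M$, the Koszul computation $\Ext^i_\Lambda(\Z_p, \Lambda) = \delta_{i,2}\, \Z_p$, and $\Ext^{\geq 2}_\Lambda(P, \Lambda) = \Ext^{\geq 2}_\Lambda(Q, \Lambda) = 0$ (since pd $\leq 1$ over $\Lambda[G]$ implies the same over $\Lambda$), the two long exact sequences collapse to
\[
(*) \colon 0 \to Q^* \to Y^* \to \Z_p \to 0, \qquad (**) \colon 0 \to Y^* \to P^* \to X^* \to 0,
\]
together with the vanishings $\Ext^2_\Lambda(X, \Lambda) = \Ext^2_\Lambda(Y, \Lambda) = 0$. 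Dualizing a length-one free resolution $0 \to F_1 \to F_0 \to P \to 0$ yields $0 \to F_0^\vee \to F_1^\vee \to P^* \to 0$, so $P^*$ and $Q^*$ also have pd $\leq 1$ over $\Lambda[G]$, and in particular $\Tor^{\Lambda[G]}_n(\F_p, P^*) = \Tor^{\Lambda[G]}_n(\F_p, Q^*) = 0$ for $n \geq 2$.

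Next I apply $\F_p \otimes^L_{\Lambda[G]} -$ to $(*)$ and $(**)$. The resulting long exact sequences, together with the vanishing of higher $\Tor$ of $P^*$ and $Q^*$, will reduce the computation of $\Tor^{\Lambda[G]}_n(\F_p, X^*)$ in the stable range to that of $\Tor^{\Lambda[G]}_*(\F_p, \Z_p)$. For the latter I construct a minimal $\Lambda[G]$-resolution of $\Z_p$ by splicing the short exact sequence $0 \to \Lambda[G] \xrightarrow{T} \Lambda[G] \to \Z_p[G] \to 0$ with a minimal $\Z_p[G]$-resolution of $\Z_p$ (whose Betti numbers are the $s_n = \dim_{\F_p} H_n(G, \F_p)$ by Lemma \ref{lem:hom_r}); equivalently, via the change-of-rings spectral sequence for $\Lambda[G] \to \Z_p[G]$, which degenerates at $E_2$ because $T$ acts trivially on both $\F_p$ and $\Z_p$. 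This yields $\dim_{\F_p} \Tor^{\Lambda[G]}_n(\F_p, \Z_p) = s_n + s_{n-1}$ for $n \geq 1$, matching the binomial shape of the formula to be proved.

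The low-degree statements ($r_2(X^*) = 2$ and $r_1(X^*) - r_0(X^*) = 1$) and the lower bound $\gen_{\Lambda[G]}(X^*) \geq s_1$ in part (1) I expect to handle by a careful analysis of the tails of the long exact sequences in degrees $0, 1$, using that $\Tor_0(\F_p, -) = \F_p \otimes_{\Lambda[G]} -$ counts generators; the $s_1$ bound should come out by tying the surjection $P^* \twoheadrightarrow X^*$ modulo $T$ and the augmentation ideal to $H_1(G, \F_p) = G^{\ab}\otimes \F_p$. The main obstacle I anticipate is producing the correct index shift: a direct chase of $(*)$ and $(**)$ gives $\Tor^{\Lambda[G]}_n(\F_p, X^*) \simeq \Tor^{\Lambda[G]}_{n-1}(\F_p, \Z_p)$ in the stable range (via $(**)$ and then $(*)$), which is off by one from the claimed $\Tor^{\Lambda[G]}_n(\F_p, X^*) \simeq \Tor^{\Lambda[G]}_{n-2}(\F_p, \Z_p)$. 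Reconciling this almost certainly requires splicing $(*)$ and $(**)$ by a Yoneda argument into a genuine four-term exact sequence $0 \to \Z_p \to A \to B \to X^* \to 0$ with $A, B$ of projective dimension $\leq 1$ over $\Lambda[G]$ — a "dual Tate sequence" for $X^*$ — and the construction of this sequence is the technical heart of the proof.
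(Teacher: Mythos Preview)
Your computation $\Ext^i_\Lambda(\Z_p,\Lambda)=\delta_{i,2}\,\Z_p$ is wrong: here $\Z_p=\Lambda/(T)$ has projective dimension $1$ over $\Lambda$, so $\Z_p^*=\Ext^1_\Lambda(\Z_p,\Lambda)\simeq\Z_p$ and $\Ext^2_\Lambda(\Z_p,\Lambda)=0$. With this correction, dualizing $0\to Y\to Q\to\Z_p\to0$ yields $0\to\Z_p\to Q^*\to Y^*\to0$, not your $(*)$. Splicing this with your $(**)$ immediately gives the four-term dual Tate sequence $0\to\Z_p\to Q^*\to P^*\to X^*\to0$; no Yoneda construction is needed, and your ``off by one'' disappears.

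However, once you have this sequence, the direct $\Tor$ chase you propose only establishes $r_n(X^*)=s_{n-2}+s_{n-3}$ for $n\ge4$ together with an alternating-sum relation in low degrees; it does \emph{not} pin down $r_3(X^*)$, $r_2(X^*)$, or $r_1(X^*)-r_0(X^*)$ individually. The obstruction is exactly that for a short exact sequence $0\to P\to M\to M'\to0$ with $P\in\cP$, the low-degree $\Tor$ sequence is an $8$-term one and you cannot read off $r_n(M')$ from $r_n(M)$. The paper overcomes this by showing (Proposition~\ref{prop:stab2}, via the axiomatic Fitting-invariant formalism of \cite{Kata_05}) that $M\mapsto r_n(M)$ for $n\ge2$ and $M\mapsto r_1(M)-r_0(M)$ are $\cP$-trivial Fitting invariants, hence are unchanged when the middle terms of the four-term sequence are replaced by any modules in $\cP$. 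One then compares the dual Tate sequence with the explicit sequence $0\to\Z_p\to\Z_p[G]^{s_0}\to\Z_p[G]^{s_1}\to\Ker(d_1)^*\to0$ coming from the truncated and dualized minimal $\Z_p[G]$-resolution of $\Z_p$, and computes $r_n(\Ker(d_1)^*)$ by hand (this is where $G\ne1$ enters, via the norm element). Your sketch has no substitute for this step. For part~(1), your argument is too vague; the paper instead specializes modulo an $f\in\cF_0$, uses cohomological triviality of $P^*/f$ and $Q^*/f$ to get $\hat H^{-1}(G,X^*/f)\simeq H^1(G,\Z_p/p^m\Z_p)$, and then invokes that $\hat H^{-1}\subset H_0$ over a DVR.
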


This theorem will be proved in \S \ref{ss:resol2}.
The idea is basically the same as that of Theorem \ref{thm:min_resol}.
However, we need an additional algebraic proposition shown in \S \ref{ss:equiv}.

\subsection{Specialization}\label{ss:spec}

We consider modules over $\Lambda = \Z_p[[T]]$.
As explained in Example \ref{eg:gen1}, $\gen_{\Lambda}(-)$ does not behave very well for short exact sequences.
A key idea to prove the main theorems is to apply specialization method to reduce to modules over DVRs.

We define
\[
\cF = \{p\} \cup \{ f \in \Z_p[T] \mid \text{$f$ is an irreducible monic distinguished polynomial} \}.
\]
Here, a monic distinguished polynomial is by definition a polynomial of the form
\[
T^e + a_1 T^{e-1} + \cdots + a_e,
\]
where $a_1, \dots, a_e \in p\Z_p$.
By the Weierstrass preparation theorem, any prime element of $\Lambda$ can be written as the product of a unit element and an element of $\cF$ in a unique way.

For each $f \in \cF$, put
\[
\cO_f = \Lambda/(f),
\]
which is a domain.
We define a subset $\cF_0 \subset \cF$ by
\[
\cF_0 = \{ f \in \cF \mid \text{$\cO_f$ is a DVR} \}.
\]

The following lemma tells us a concrete description of $\cF_0$.
Although the lemma is unnecessary for the proof of the main results, we include it in this paper to clarify the situation.

\begin{lem}
We have $\cF_0 = \{p\} \cup \cF_1 \cup \cF_2$, where we put
\[
\cF_1 = 
\{T - \alpha \mid \alpha \in p\Z_p\}
\]
and
\[
\cF_2 = \{T^e + a_1 T^{e-1} + \dots + a_e \mid e \geq 2, a_1, \dots, a_{e-1} \in p\Z_p, a_e \in p\Z_p \setminus p^2\Z_p\}.
\]
\end{lem}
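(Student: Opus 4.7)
My plan is to split $\cF$ into three ranges according to $\deg f$ and, in each, invoke the criterion that a Noetherian local domain of Krull dimension one is a DVR precisely when its maximal ideal is principal, equivalently when $\dim_{\F_p} \fm/\fm^2 = 1$. In every case at hand, $\cO_f = \Lambda/(f)$ is such a ring: it is a domain because $(f)$ is prime (as $\Lambda$ is a UFD and $f$ is irreducible), and it has Krull dimension one since $(f)$ is a height-one prime in the two-dimensional regular local ring $\Lambda$. Its maximal ideal is $\fm = (p, T)/(f)$, so everything reduces to computing $(p,T)/\bigl((p,T)^2 + (f)\bigr)$ inside $\Lambda$.

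The cases $f = p$ and $f = T - \alpha$ with $\alpha \in p\Z_p$ are immediate, since $\Lambda/(p) = \F_p[[T]]$ and $\Lambda/(T - \alpha) \cong \Z_p$ (via $T \mapsto \alpha$) are visibly DVRs; this shows $\{p\} \cup \cF_1 \subset \cF_0$.

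The substantive case is a distinguished polynomial $f = T^e + a_1 T^{e-1} + \cdots + a_e$ with $e \geq 2$. Here the key observation is that $T^e \in (T^2) \subset (p, T)^2$ and, for $1 \leq i \leq e-1$, $a_i T^{e-i} \in (pT) \subset (p,T)^2$ because $a_i \in p\Z_p$ and $e - i \geq 1$; hence $f \equiv a_e \pmod{(p,T)^2}$. Since $(p,T)/(p,T)^2$ is the two-dimensional $\F_p$-vector space $\F_p \cdot \overline{p} \oplus \F_p \cdot \overline{T}$, quotienting by the class $\overline{a_e}$ drops its dimension to one iff $a_e$ has nonzero image there, that is, iff $a_e \in p\Z_p \setminus p^2\Z_p$. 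This is precisely the Eisenstein condition at $p$ defining $\cF_2$, and combined with the previous paragraph yields $\cF_0 = \{p\} \cup \cF_1 \cup \cF_2$. I do not expect any serious obstacle: the argument rests only on the standard cotangent-space criterion for a DVR together with the elementary observation that, for a distinguished polynomial of degree $\geq 2$, only the constant term survives modulo $(p,T)^2$.
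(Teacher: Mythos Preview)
Your proof is correct and takes a genuinely different route from the paper's. The paper argues via the arithmetic of local fields: for $f \in \cF_0 \setminus \{p\}$ it identifies $\cO_f$ with the ring of integers of a $p$-adic field $K_f$, observes that the residue field being $\F_p$ forces $K_f/\Q_p$ to be totally ramified, and then invokes the classical fact (e.g., \cite[Chap.~I, Proposition~18]{Ser79}) that the minimal polynomial of a uniformizer in a totally ramified extension is Eisenstein. Your argument instead stays entirely within commutative algebra, computing the cotangent space $\fm/\fm^2 = (p,T)/\bigl((p,T)^2 + (f)\bigr)$ directly and using the criterion that a one-dimensional Noetherian local domain is a DVR precisely when this space is one-dimensional. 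The key step---that for a distinguished $f$ of degree $\geq 2$ one has $f \equiv a_e \pmod{(p,T)^2}$, so the Eisenstein condition $a_e \notin p^2\Z_p$ is exactly what cuts the cotangent dimension from $2$ to $1$---is clean and self-contained. Your approach avoids any appeal to ramification theory; the paper's approach, in exchange, makes the structure of $\cO_f$ (as the valuation ring of a totally ramified extension) more transparent. One small point you leave implicit: to conclude $\cF_2 \subset \cF_0$ you also need $\cF_2 \subset \cF$, i.e., that Eisenstein polynomials are irreducible, which the paper states explicitly.
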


\begin{proof}
It is clear that $\{p \} \subset \cF_0$ and $\cF_1 \subset \cF_0$.
Also, $\cF_2 \subset \cF_0$ holds by the Eisenstein irreducibility criterion.
Therefore, it remains to only show $\cF_0 \setminus \{p\} \subset \cF_1 \cup \cF_2$.

Let $f \in \cF_0 \setminus \{p\}$.
Since $\cO_f$ is a DVR, it is the integral closure of $\Z_p$ in the $p$-adic field $K_f = \Frac(\cO_f)$.
Moreover, since the residue field of $\cO_f$ is the same as that of $\Lambda$, namely $\F_p$, we see that the extension $K_f/\Q_p$ is totally ramified.
In case the extension $K_f/\Q_p$ is trivial, we have $\deg(f) = 1$, so we obtain $f \in \cF_1$.
In case $K_f/\Q_p$ is non-trivial, the image of $T$ in $\cO_f$ must be a uniformizer of $\cO_f$, so its minimal polynomial $f$ is in $\cF_2$ (see \cite[Chap.~I, Proposition 18]{Ser79}).
This completes the proof.
\end{proof}

\subsection{Proof of Theorem \ref{thm:min_resol}(1)}\label{ss:pf_spec}

Let us now study a $\Lambda[G]$-module $X$ satisfying a Tate sequence as in Setting \ref{set:Tate}.
We define a $\Lambda$-module $X_{(G)}$ by
\begin{equation}\label{eq:Tate2}
0 \to X_{(G)} \to P_G \overset{\phi_G}{\to} Q_G \to \Z_p \to 0,
\end{equation}
where $P_G$ and $Q_G$ denote the $G$-coinvariant modules and $\phi_G$ denotes the induced homomorphism.
Note that $X_{(G)}$ does not coincide with the coinvariant module $X_G$ in general; in fact, the difference is what we shall investigate from now on.

The following proposition is a key to prove the main theorem.

\begin{prop}\label{prop:sp_hom}
Let $f \in \cF$ be an element that is prime to both $\ch_{\Lambda}(P)$ and $\ch_{\Lambda}(Q)$, where $\ch_{\Lambda}(-)$ denotes the characteristic polynomial.
We set $m = \ord_p(f(0)) \geq 1$.
Then we have an exact sequence of finitely generated torsion $\cO_f$-modules
\[
0 \to H_2(G, \Z_p/p^m \Z_p) \to (X/f)_G \to X_{(G)}/f \overset{\pi}{\to} H_1(G, \Z_p/p^m \Z_p) \to 0.
\]
\end{prop}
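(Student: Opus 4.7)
The plan is to tensor the Tate sequence with $\cO_f = \Lambda/(f)$ and carefully analyze the resulting complex of $\cO_f[G]$-modules under $G$-coinvariants.

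First, I would verify that $P$, $Q$, and hence $X \subseteq P$, are $f$-torsion-free as $\Lambda$-modules. Since $P$ and $Q$ have projective dimension $\leq 1$ over $\cR = \Lambda[G]$, taking $\Lambda$-underlying modules of a free $\cR$-resolution shows they are also of projective dimension $\leq 1$ over $\Lambda$, and so have no nontrivial pseudo-null $\Lambda$-submodule. Their $\Lambda$-associated primes are therefore of height one and divide the respective characteristic ideals; the coprimality hypothesis then forces $f$ to be a non-zero-divisor on $P$ and $Q$. Using this together with $\Tor_1^\Lambda(\Z_p, \cO_f) = \Z_p[f] = 0$ (which holds because $f$ acts on $\Z_p$ as $f(0) = p^m u$ for some $u \in \Z_p^\times$), tensoring the Tate sequence with $\cO_f$ yields the exact sequence
\[
0 \to X/f \to P/f \to Q/f \to \Z_p/p^m \to 0
\]
of $\cO_f[G]$-modules.

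Next I would establish the cohomological vanishing $H_i(G, P/f) = H_i(G, Q/f) = 0$ for $i \geq 2$. Since $\cR$-free modules are $G$-acyclic in positive degrees (Shapiro), a length-$1$ free $\cR$-resolution $0 \to P_1 \to P_0 \to P \to 0$ gives $H_i(G, P) = 0$ for $i \geq 2$ and realizes $H_1(G, P)$ as a submodule of the $\Lambda$-free module $(P_1)_G$, so $H_1(G, P)$ is $\Lambda$-torsion-free and in particular has no $f$-torsion. The long exact sequence of $H_*(G,-)$ applied to $0 \to P \xrightarrow{f} P \to P/f \to 0$ now yields the claimed vanishing; the same argument works for $Q$.

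Finally, I would split the mod-$f$ Tate sequence as $0 \to X/f \to P/f \to Y/f \to 0$ and $0 \to Y/f \to Q/f \to \Z_p/p^m \to 0$ with $Y = \mathrm{im}(\phi)$, and take the long exact sequences of $H_*(G, -)$; the vanishings above truncate both. In parallel, tensoring $0 \to X_{(G)} \to P_G \to \mathrm{im}(\phi_G) \to 0$ with $\cO_f$ and using $\mathrm{im}(\phi_G)[f] = Q_G[f]$ gives the short exact sequence $0 \to Q_G[f] \to X_{(G)}/f \to (X/f)_{(G)} \to 0$ comparing $X_{(G)}/f$ with $(X/f)_{(G)} := \ker(\phi_G/f)$. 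Splicing all of these together via a diagram chase produces the desired $4$-term exact sequence. The main obstacle is this final chase: identifying the kernel and cokernel of the canonical map $(X/f)_G \to X_{(G)}/f$ precisely as $H_2(G, \Z_p/p^m)$ and $H_1(G, \Z_p/p^m)$ requires simultaneously keeping track of the difference between $X_{(G)}/f$ and $(X/f)_{(G)}$ (measured by $Q_G[f]$), the extension $0 \to H_1(G, Q)/f \to H_1(G, Q/f) \to Q_G[f] \to 0$ (and its analogue for $P$), and the connecting homomorphisms in the two long exact sequences, so that the various $f$-torsion contributions cancel against each other in just the right way.
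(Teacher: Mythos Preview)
Your overall strategy matches the paper's, but you have not identified the observation that dissolves your ``main obstacle'': the modules $P/f$ and $Q/f$ are fully $G$-cohomologically trivial, not merely acyclic in degrees $\geq 2$. Reducing a length-one free $\Lambda[G]$-resolution of $P$ modulo $f$ gives $0 \to \cO_f[G]^a \to \cO_f[G]^a \to P/f \to 0$, and $\cO_f[G]^a$, being free over $\Z_p[G]$, is cohomologically trivial; hence so is $P/f$. (Your own computation is in fact one step away from this: you show $H_1(G,P)\subset\Lambda^a$ is $\Lambda$-torsion-free, but since $P$ is $\Lambda$-torsion the ranks $r_0,r_1$ agree and $H_1(G,P)$ has $\Lambda$-rank zero, whence $H_1(G,P)=0$. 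Then $H_1(G,P/f)\cong P_G[f]$, and the norm isomorphism $P_G\cong P^G\subset P$ forces $P_G[f]=0$.) With $H_1(G,P/f)=H_1(G,Q/f)=0$ and $P_G[f]=Q_G[f]=0$ in hand, every $f$-torsion term in your final paragraph is identically zero: the sequence $0\to X_{(G)}/f\to P_G/f\to Q_G/f\to\Z_p/p^m\to 0$ is exact as written, and the paper concludes by a single snake lemma comparing it with $0\to H_2(G,\Z_p/p^m)\to(X/f)_G\to(P/f)_G\to L_G\to 0$, where $L$ is the image of $P/f\to Q/f$. No delicate cancellation is needed.

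As a side remark, the short exact sequence $0\to Q_G[f]\to X_{(G)}/f\to(X/f)_{(G)}\to 0$ you write down already needs $P_G[f]=0$ for the left arrow to be injective, so it presupposes part of the missing input.
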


\begin{proof}
Firstly note that $f(0) \neq 0$ since $\ch_{\Lambda}(Q)$ is divisible by $\ch_{\Lambda}(\Z_p) = (T)$.
By taking modulo $f$ of the sequence \eqref{eq:Tate}, we obtain an exact sequence of finitely generated torsion $\cO_{f}[G]$-modules
\[
0 \to X/f \to P/f \overset{\ol{\phi}}{\to} Q/f \to \Z_p/p^m \Z_p \to 0.
\]
Let $L$ denote the image of the map $\ol{\phi}: P/f \to Q/f$.
Since both $P/f$ and $Q/f$ are $G$-cohomologically trivial, taking the $G$-homology, we obtain exact sequences
\[
0 \to H_1(G, L) 
\to (X/f)_G 
\to (P/f)_G 
\to L_G 
\to 0
\]
and
\[
0 \to H_1(G, \Z_p/p^m \Z_p)
\to L_G
\to Q_G/f
\to \Z_p/p^m \Z_p
\to 0
\]
and also an isomorphism $H_2(G, \Z_p/p^m \Z_p) \simeq H_1(G, L)$.

We can combine these observations with the exact sequence obtained by taking modulo $f$ of sequence \eqref{eq:Tate2} to construct a diagram
\[
\xymatrix{
	& & & & H_1(G, \Z_p/p^m \Z_p) \ar@{^(->}[d] & \\
	0 \ar[r]
	& H_2(G, \Z_p/p^m \Z_p) \ar[r]
	& (X/f)_G \ar[r] \ar[d]
	& (P/f)_G \ar[r] \ar[d]_{\simeq}
	& L_G \ar[r] \ar[d]
	& 0\\
	& 0 \ar[r]
	& X_{(G)}/f \ar[r]
	& P_G/f \ar[r]
	& Q_G/f \ar[r] \ar@{->>}[d]
	& \Z_p/p^m \Z_p \ar[r]
	& 0 \\
	& & & & \Z_p/p^m \Z_p & &
}
\]
This is a commutative diagram of finitely generated torsion $\cO_f$-modules.
By applying the snake lemma, we obtain the proposition.
\end{proof}

\begin{proof}[Proof of Theorem \ref{thm:min_resol}(1)]
In Proposition \ref{prop:sp_hom}, we take $f$ so that $f \in \cF_0$, i.e., $\cO_f$ is a DVR.
Then the injective homomorphism from $H_2(G, \Z_p/p^m \Z_p)$ to $(X/f)_G$ in Proposition \ref{prop:sp_hom} implies
\[
\gen_{\Lambda[G]}(X)
= \gen_{\cO_f}((X/f)_G) 
\geq \gen_{\cO_f}(H_2(G, \Z_p/p^m \Z_p)),
\]
where the first equality follows from Nakayama's lemma.
Since $H_2(G, \Z_p/p^m \Z_p)$ is annihilated by $T$ and $\cO_f/(T) \simeq \Z_p/p^m\Z_p$, we have 
\[
\gen_{\cO_f}(H_2(G, \Z_p/p^m \Z_p))
= \gen_{\Z_p/p^m\Z_p}(H_2(G, \Z_p/p^m \Z_p))
= \dim_{\F_p}(H_2(G, \F_p))
= s_2,
\]
where the second equality follows from Lemma \ref{lem:H1}.
Combining these formulas, we obtain $\gen_{\Lambda[G]}(X) \geq s_2$, as claimed.
\end{proof}

\begin{rem}
This argument also shows
\[
\max \left\{ s_2, \gen_{\cO_f}(\Ker(\pi)) \right\} 
\leq \gen_{\Lambda[G]}(X)
\leq s_2 + \gen_{\cO_f}(\Ker(\pi))
\]
with $\pi$ as in Proposition \ref{prop:sp_hom}.
Then the full statement of Theorem \ref{thm:bound2}(1) follows if we can take $f$ so that $\gen_{\cO_f}(\Ker(\pi))$ coincides with the $t$ in Theorem \ref{thm:bound2}(1).
This is indeed possible, but in \S \ref{sec:pf} we will give a more direct proof for the rest of Theorem \ref{thm:bound2}(1) instead.
\end{rem}

\subsection{Proof of Theorem \ref{thm:min_resol}(2)}\label{ss:resol}

We introduce several lemmas.
We abbreviate $r_n^{\Lambda[G]}(-)$ as $r_n(-)$ and we never omit the coefficient ring otherwise.

\begin{lem}\label{lem:Fitt_inv1}
Let $P$ be a finitely generated torsion $\Lambda[G]$-module whose projective dimension is $\leq 1$.
Then we have
$r_n(P) = 0$ for $n \geq 2$ and $r_1(P) = r_0(P)$.
\end{lem}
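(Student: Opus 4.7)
The plan is to exploit the fact that $\Lambda[G]$ is a Noetherian local ring whose residue field is $\F_p$, so that any finitely generated projective $\Lambda[G]$-module is automatically free, and a minimal projective resolution exists and is unique up to (non-canonical) isomorphism with its $n$-th term having rank exactly $r_n(P)$.

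First, since $P$ has projective dimension $\leq 1$, a minimal resolution of $P$ must terminate at degree $1$, and hence takes the form
\[
0 \to \Lambda[G]^{r_1(P)} \to \Lambda[G]^{r_0(P)} \to P \to 0.
\]
Using this resolution to compute $\Tor_n^{\Lambda[G]}(\F_p, P)$ at once gives $r_n(P) = 0$ for every $n \geq 2$, which is the first half of the claim.

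For the equality $r_1(P) = r_0(P)$, I would pass to a suitable ``rank'' argument based on the fact that $P$ is torsion. Since $\Lambda[G]$ is a finite free module of rank $|G|$ over $\Lambda$, the above short exact sequence is in particular exact as a sequence of finitely generated $\Lambda$-modules, with the two outer terms free $\Lambda$-modules of ranks $r_0(P)|G|$ and $r_1(P)|G|$. Because $P$ is $\Lambda[G]$-torsion and $\Lambda[G]$ is finite over $\Lambda$, $P$ is also $\Lambda$-torsion, so tensoring the sequence with $\Frac(\Lambda)$ gives $\Frac(\Lambda)^{r_0(P)|G|} \simeq \Frac(\Lambda)^{r_1(P)|G|}$, forcing $r_0(P) = r_1(P)$.

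There is no real obstacle here: once one invokes the existence of a minimal projective resolution over the local ring $\Lambda[G]$ (which is standard and has already been used implicitly in Definition \ref{defn:min_resol}), both statements reduce to elementary counting. The only subtlety worth mentioning is that $\Lambda[G]$ need not be commutative when $G$ is non-abelian, which is why I would not talk about a $\Lambda[G]$-rank directly but rather descend to the commutative ring $\Lambda$ to compare ranks.
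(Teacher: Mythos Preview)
Your proof is correct and follows essentially the same approach as the paper. The paper simply asserts the existence of a presentation $0 \to \Lambda[G]^a \to \Lambda[G]^a \to P \to 0$ with equal ranks and says the lemma is immediate; your version makes explicit the rank comparison (by descending to $\Lambda$ and tensoring with $\Frac(\Lambda)$) that the paper leaves implicit, and your remark about avoiding a direct $\Lambda[G]$-rank in the non-abelian case is a sensible clarification.
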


\begin{proof}
By the assumption on $P$, there exists a presentation of $P$ of the form $0 \to \Lambda[G]^a \to \Lambda[G]^a \to P \to 0$.
The lemma follows immediately from this.
\end{proof}

\begin{lem}\label{lem:rn_rel}
Let $0 \to M' \to P \to M \to 0$ be a short exact sequence of finitely generated torsion $\Lambda[G]$-modules such that the projective dimension of $P$ is $\leq 1$.
Then we have
\[
r_n(M') = r_{n+1}(M)
\]
for $n \geq 2$ and
\[
r_1(M') - r_0(M') = r_2(M) - r_1(M) + r_0(M).
\]
\end{lem}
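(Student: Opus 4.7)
The plan is to derive everything from the long exact sequence of $\Tor$ groups associated to the given short exact sequence, together with the vanishing information provided by Lemma \ref{lem:Fitt_inv1}. Recall that by Definition \ref{defn:min_resol} we have $r_n(N) = \dim_{\F_p} \Tor_n^{\Lambda[G]}(\F_p, N)$ for any finitely generated $\Lambda[G]$-module $N$.

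First, I would apply $\F_p \otimes_{\Lambda[G]}^{\mathsf{L}} (-)$ to $0 \to M' \to P \to M \to 0$ to obtain the long exact sequence
\[
\cdots \to \Tor_{n+1}(\F_p, M) \to \Tor_n(\F_p, M') \to \Tor_n(\F_p, P) \to \Tor_n(\F_p, M) \to \cdots.
\]
Lemma \ref{lem:Fitt_inv1} tells us $\Tor_n(\F_p, P) = 0$ for all $n \geq 2$, since the projective dimension of $P$ is at most $1$. For any $n \geq 2$ this makes both $\Tor_n(\F_p, P)$ and $\Tor_{n+1}(\F_p, P)$ vanish, so the connecting piece
\[
0 \to \Tor_{n+1}(\F_p, M) \to \Tor_n(\F_p, M') \to 0
\]
immediately yields $r_n(M') = r_{n+1}(M)$.

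For the second assertion I would look at the low-degree tail of the same long exact sequence, namely
\[
0 \to \Tor_2(\F_p, M) \to \Tor_1(\F_p, M') \to \Tor_1(\F_p, P) \to \Tor_1(\F_p, M) \to \Tor_0(\F_p, M') \to \Tor_0(\F_p, P) \to \Tor_0(\F_p, M) \to 0,
\]
where the leftmost $0$ comes from $\Tor_2(\F_p, P) = 0$. Taking the Euler characteristic (alternating sum of $\F_p$-dimensions) of this exact sequence of finite-dimensional $\F_p$-vector spaces gives
\[
r_2(M) - r_1(M') + r_1(P) - r_1(M) + r_0(M') - r_0(P) + r_0(M) = 0.
\]
Now I would substitute the second part of Lemma \ref{lem:Fitt_inv1}, namely $r_1(P) = r_0(P)$, so that the $P$ terms cancel, and rearrange to conclude
\[
r_1(M') - r_0(M') = r_2(M) - r_1(M) + r_0(M).
\]

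There is really no serious obstacle here; the argument is a formal consequence of the long exact sequence of $\Tor$ and the preceding lemma. The only minor point to be careful about is to ensure that every term in the Euler-characteristic computation is finite-dimensional over $\F_p$, which follows because $M'$, $P$, $M$ are finitely generated over the Noetherian local ring $\Lambda[G]$ and so all their $\Tor_n(\F_p, -)$ are finite-dimensional $\F_p$-vector spaces.
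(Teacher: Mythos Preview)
Your proof is correct and follows exactly the approach the paper indicates: apply the long exact sequence of $\Tor_*^{\Lambda[G]}(\F_p,-)$ to the given short exact sequence and use Lemma~\ref{lem:Fitt_inv1} to kill the higher $\Tor$ groups of $P$ and to cancel $r_1(P)$ against $r_0(P)$. Your write-up simply fills in the details the paper leaves implicit.
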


\begin{proof}
This follows immediately from the long exact sequence of $\Tor_*^{\Lambda[G]}(\F_p, -)$ applied to the given sequence, taking Lemma \ref{lem:Fitt_inv1} into account.
\end{proof}

\begin{lem}\label{lem:resol_bc}
Let $M$ be a finitely generated $\Z_p[G]$-module that is free over $\Z_p$.
We regard $M$ as a $\Lambda[G]$-module so that $T$ acts trivially on $M$.
Then we have
\[
r_n^{\Lambda[G]}(M) = r_n^{\Z_p[G]}(M) + r_{n-1}^{\Z_p[G]}(M)
\]
for $n \geq 0$.
Here, we set $r_{-1}^{\Z_p[G]}(M) = 0$.
\end{lem}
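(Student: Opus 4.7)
The plan is to construct an explicit minimal $\Lambda[G]$-free resolution of $M$ by amplifying a minimal $\Z_p[G]$-free resolution termwise via the short exact sequence
\[
0 \to \Lambda[G] \xrightarrow{T} \Lambda[G] \to \Z_p[G] \to 0.
\]
Concretely, I would first fix a minimal $\Z_p[G]$-free resolution $F_\bullet \to M \to 0$ with $F_j = \Z_p[G]^{a_j}$ and $a_j := r_j^{\Z_p[G]}(M)$, whose differentials $d_j^F$ have all matrix entries in $\fm_{\Z_p[G]}$. Lifting these matrices along the inclusion $\Z_p[G] \hookrightarrow \Lambda[G]$ and combining them with the above short exact sequence termwise produces a first-quadrant bicomplex $P_{\bullet,\bullet}$ with $P_{0,j} = P_{1,j} = \Lambda[G]^{a_j}$, horizontal differential $P_{1,j} \xrightarrow{T} P_{0,j}$, and vertical differentials the lifted $d_j^F$.

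Next I would verify that the total complex $\mathrm{Tot}(P)_\bullet$, whose degree-$n$ term is $P_{0,n} \oplus P_{1,n-1} = \Lambda[G]^{a_n + a_{n-1}}$ (with the convention $a_{-1} := 0$), is a $\Lambda[G]$-free resolution of $M$. For this I would run the spectral sequence of the bicomplex that first takes horizontal homology: since $T$ is a nonzerodivisor on $\Lambda[G]$, the column above index $j$ collapses onto its cokernel, which is precisely $\Z_p[G]^{a_j} = F_j$. The $E_1$-page therefore reduces to the single complex $F_\bullet$, which resolves $M$, and the spectral sequence converges to $M$ concentrated in total degree zero. The natural augmentation $\mathrm{Tot}(P)_0 = \Lambda[G]^{a_0} \twoheadrightarrow \Z_p[G]^{a_0} \to M$ then exhibits $\mathrm{Tot}(P)_\bullet$ as the claimed resolution.

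Finally, this resolution is automatically minimal: $T \in \fm_{\Lambda[G]}$, and the lifted matrix entries of $d_j^F$ remain in $\fm_{\Z_p[G]} \subset \fm_{\Lambda[G]}$ by the minimality of $F_\bullet$. Counting ranks then immediately yields
\[
r_n^{\Lambda[G]}(M) = a_n + a_{n-1} = r_n^{\Z_p[G]}(M) + r_{n-1}^{\Z_p[G]}(M),
\]
as required. The only step needing a little care is the bicomplex convergence argument in the second paragraph, ensuring that the total complex resolves $M$ itself rather than a base-changed object such as $\Lambda[G] \otimes_{\Z_p[G]} M$; this is handled cleanly by running the horizontal direction first so that the $T$-multiplication consumes the extra $\Lambda[G]$-freeness before the vertical $F_\bullet$ data comes into play. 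No arithmetic input beyond standard bicomplex homological algebra is required, and in particular the $\Z_p$-freeness of $M$ is used only implicitly to ensure the setup is well-posed.
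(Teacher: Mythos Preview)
Your proof is correct and essentially identical to the paper's: the paper tensors a minimal $\Z_p[G]$-resolution of $M$ with the two-term $\Lambda$-resolution $[\Lambda \xrightarrow{T} \Lambda]$ of $\Z_p$ over $\Z_p$, which is literally the total complex of your bicomplex. The paper simply asserts exactness of the resulting complex, whereas you justify it via the horizontal-first spectral sequence; both amount to the same computation.
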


\begin{proof}
Let us take a minimal resolution of $M$ as a $\Z_p[G]$-module
\[
\cdots \to \Z_p[G]^{\rho_2} \to \Z_p[G]^{\rho_1} \to \Z_p[G]^{\rho_0} \to M \to 0,
\]
where we put $\rho_n = r_n^{\Z_p[G]}(M)$.
We have an exact sequence
\[
0 \to \Lambda \overset{\times T}{\to} \Lambda \to \Z_p \to 0,
\]
which may be regarded as a minimal resolution of $\Z_p$ as a $\Lambda$-module.
Then we take the tensor product over $\Z_p$ of the two complex above (omitting $M$ and $\Z_p$ respectively).
As a result, we obtain an exact sequence
\[
\cdots \to \Lambda[G]^{\rho_2 + \rho_1} \to \Lambda[G]^{\rho_1 + \rho_0} \to \Lambda[G]^{\rho_0} \to M \to 0.
\]
By construction, this is a minimal resolution of $M$ as a $\Lambda[G]$-module.
This completes the proof of the lemma.
\end{proof}

Now we are ready to prove Theorem \ref{thm:min_resol}(2).

\begin{proof}[Proof of Theorem \ref{thm:min_resol}(2)]
First we recall $s_n = r_n^{\Z_p[G]}(\Z_p)$ by Lemma \ref{lem:hom_r}.
By applying Lemma \ref{lem:resol_bc} to $M = \Z_p$, we obtain
\[
r_n(\Z_p) = r_n^{\Lambda[G]}(\Z_p) = s_n + s_{n-1}
\]
for $n \geq 0$, where we set $s_{-1} = 0$.
We apply Lemma \ref{lem:rn_rel} to the two short exact sequences obtained by splitting the Tate sequence.
As a consequence, we obtain
\[
r_n(X) = r_{n+2}(\Z_p) = s_{n+2} + s_{n+1}
\]
for $n \geq 2$ and
\begin{align}
r_1(X) - r_0(X)
& = r_3(\Z_p) - r_2(\Z_p) + r_1(\Z_p) - r_0(\Z_p)\\
& = (s_3 + s_2) - (s_2 + s_1) + (s_1 + s_0) - (s_0 + s_{-1})\\
& = s_3.
\end{align}
This completes the proof.
\end{proof}

\subsection{An algebraic proposition}\label{ss:equiv}

This subsection provides preliminaries to the proof of Theorem \ref{thm:min_resol2}.
Let $\cC$ be the category of finitely generated torsion $\Lambda[G]$-modules whose projective dimension over $\Lambda$ is $\leq 1$, that is, those that do not have nontrivial finite submodules.

We also write $\cP$ for the subcategory of $\cC$ that consists of modules whose projective dimension over $\Lambda[G]$ is $\leq 1$.

For a module $M \in \cC$, it is known that the dual
\[
M^{*} = \Ext^1_{\Lambda[G]}(M, \Lambda[G]) 
\simeq \Ext^1_{\Lambda}(M, \Lambda)
\]
is also in $\cC$ and $(M^*)^* \simeq M$ (\cite[Propositions 5.5.3 (ii) and 5.5.8 (iv)]{NSW08}).
Moreover, if $P \in \cP$, we have $P^{*} \in \cP$.
These facts are also explained in \cite[\S 3.1]{Kata_05}.

In this subsection, we prove the following proposition.

\begin{prop}\label{prop:stab}
Let $d \geq 0$ be an integer.
Let us consider exact sequences
\[
0 \to N \to P_1 \to \cdots \to P_d \to M \to 0
\]
and
\[
0 \to N' \to P_1' \to \cdots \to P_d' \to M' \to 0
\]
in $\cC$ such that $P_i, P_i' \in \cP$ for $1 \leq i \leq d$.
Then the following hold.
\begin{itemize}
\item[(1)]
If $M \simeq M'$, then we have
\[
r_n(N) = r_n(N')
\]
for $n \geq 2$ and
\[
r_1(N) - r_0(N) = r_1(N') - r_0(N').
\]
\item[(2)]
Similarly, if $N \simeq N'$, then we have
\[
r_n(M) = r_n(M')
\]
for $n \geq 2$ and
\[
r_1(M) - r_0(M) = r_1(M') - r_0(M').
\]
\end{itemize}
\end{prop}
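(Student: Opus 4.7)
My overall strategy is to handle Part (1) by a direct induction using Lemma \ref{lem:rn_rel}, and to reduce Part (2) to Part (1) by applying the duality $(-)^{*}$.

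For Part (1), I would first reduce to the case $d=1$ by splicing the long exact sequence into short exact sequences
\[
0 \to C_{i} \to P_{i+1} \to C_{i+1} \to 0 \qquad (0 \le i \le d-1),
\]
where $C_{0} = N$ and $C_{d} = M$, and each $C_{i}$ (the image of $P_{i} \to P_{i+1}$, with the convention $P_{0} = N$ and $P_{d+1} = M$) lies in $\cC$. For any one of these short exact sequences, Lemma \ref{lem:rn_rel} directly gives $r_{n}(C_{i}) = r_{n+1}(C_{i+1})$ for $n \ge 2$ and $r_{1}(C_{i}) - r_{0}(C_{i}) = r_{2}(C_{i+1}) - r_{1}(C_{i+1}) + r_{0}(C_{i+1})$, so the invariants $r_{n}(C_{i})$ for $n \ge 2$ and $r_{1}(C_{i}) - r_{0}(C_{i})$ are expressible entirely in terms of the analogous invariants of $C_{i+1}$. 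Reverse-inducting from $C_{d} = M$ down to $C_{0} = N$, the invariants of $N$ are determined by those of $M$, so the hypothesis $M \simeq M'$ yields the claim.

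For Part (2), I would apply the contravariant exact functor $(-)^{*} = \Ext^{1}_{\Lambda[G]}(-, \Lambda[G])$ to both given exact sequences. Since $(-)^{*}$ is exact on $\cC$ (because $\Hom_{\Lambda[G]}(-, \Lambda[G]) = 0$ and $\Ext^{i}_{\Lambda[G]}(-, \Lambda[G]) = 0$ for $i \ge 2$ on $\cC$), restricts to $\cP \to \cP$, and satisfies $(A^{*})^{*} \simeq A$, the dualized sequences
\[
0 \to M^{*} \to P_{d}^{*} \to \cdots \to P_{1}^{*} \to N^{*} \to 0
\]
(and the primed version) are again exact sequences in $\cC$ with middle terms in $\cP$, and the hypothesis $N \simeq N'$ gives $N^{*} \simeq N'^{*}$. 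Part (1) applied to the dualized sequences, with $N^{*}, N'^{*}$ now playing the role of the right-hand endpoint, yields $r_{n}(M^{*}) = r_{n}(M'^{*})$ for $n \ge 2$ and $r_{1}(M^{*}) - r_{0}(M^{*}) = r_{1}(M'^{*}) - r_{0}(M'^{*})$.

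The final step, and what I expect to be the main obstacle, is the transition from invariants of $M^{*}, M'^{*}$ back to invariants of $M, M'$. Concretely, one needs that for every $A \in \cC$ the invariants $r_{n}(A)$ for $n \ge 2$ and $r_{1}(A) - r_{0}(A)$ coincide with the corresponding invariants of $A^{*}$. I would establish this by analyzing the $\Lambda[G]$-dual of a minimal resolution of $A$: since $A \in \cC$ satisfies $\Ext^{i}(A, \Lambda[G]) = 0$ for $i \neq 1$, the dualized minimal resolution is a complex of free modules with cohomology $A^{*}$ concentrated in degree $1$, which can be spliced together with standard syzygies to compute the $\Tor$-dimensions of $A^{*}$ in terms of those of $A$ (and show the desired invariants match). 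This self-duality, once in hand, combines with the previous step to complete Part (2).
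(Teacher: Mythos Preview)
Your treatment of Part (1) is correct and is precisely what the paper's Remark \ref{rem:resol_indep} records.

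Your approach to Part (2), however, has a genuine gap at the final step. The claim that $r_n(A) = r_n(A^*)$ for $n \geq 2$ (and the analogous statement for $r_1 - r_0$) is \emph{false} for general $A \in \cC$. A concrete counterexample: take $G = (\Z/p\Z)^2$ and let $A = \Ker(d_1)$ be the first syzygy of $\Z_p$ in a minimal $\Z_p[G]$-resolution, viewed as a $\Lambda[G]$-module with $T$ acting trivially (this lies in $\cC$ since it is $\Z_p$-free, and the modules $\Z_p[G]^{s_i} \simeq \Lambda[G]^{s_i}/(T)$ lie in $\cP$). Your own Part (1) argument gives $r_2^{\Lambda[G]}(A) = r_4^{\Lambda[G]}(\Z_p) = s_4 + s_3 = 5 + 4 = 9$, whereas the paper's explicit computation in the proof of Theorem \ref{thm:min_resol2}(2) gives $r_2^{\Lambda[G]}(A^*) = s_0 + s_0 = 2$. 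The reason your sketch fails is that the $\Lambda[G]$-dual of a minimal free resolution of $A$ is a cochain complex with cohomology $A^*$ concentrated in degree $1$, not a resolution of $A^*$; there is no simple mechanism by which the ranks of this complex yield the ranks of a minimal resolution of $A^*$.

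The paper bypasses this obstruction by reformulating the problem in terms of abstract ``$\cP$-trivial Fitting invariants'' (Proposition \ref{prop:stab2}). One verifies directly that the maps $M \mapsto r_n(M)$ for $n \geq 2$ and $M \mapsto r_1(M) - r_0(M)$ are unchanged when $M$ is replaced by $M'$ in a short exact sequence $0 \to M' \to M \to P \to 0$ with $P \in \cP$ (this is essentially your Part (1) for $d=1$). The crucial input is then a general result from \cite[Proposition 3.17]{Kata_05} asserting that this property is \emph{equivalent} to invariance under sequences $0 \to P \to M \to M' \to 0$ with $P \in \cP$; once that is known, the theory of shifts of Fitting invariants \cite[Theorem 3.19]{Kata_05} gives Part (2) immediately. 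It is this equivalence of the two invariance conditions --- not any self-duality of the invariants under $(-)^*$ --- that carries the weight of the argument.
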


\begin{rem}\label{rem:resol_indep}
It is easy to deduce claim (1) from Lemma \ref{lem:rn_rel}.
Indeed, we have 
\[
r_n(N) = r_{n+d}(M)
\]
for $n \geq 2$ and 
\[
r_1(N) - r_0(N) = \sum_{i=0}^{d+1} (-1)^{i} r_{d+1-i}(M).
\]
On the other hand, claim (2) cannot be deduced from Lemma \ref{lem:rn_rel}.
Roughly speaking, claims (1) and (2) are respectively what we need to prove Theorems \ref{thm:min_resol} and \ref{thm:min_resol2}.
\end{rem}

To prove Proposition \ref{prop:stab}, it is convenient to use the concept of axiomatic Fitting invariants introduced by the first author \cite{Kata_05}.
More concretely, inspired by \cite[\S 3.2]{GK_20}, we use the notion of $\cP$-trivial Fitting invariant defined as follows.

\begin{defn}
A $\cP$-trivial Fitting invariant is a map $\cF: \cC \to \Omega$, where $\Omega$ is a commutative monoid, satisfying the following properties:
\begin{itemize}
\item[(i)]
If $P \in \cP$, we have $\cF(P)$ is the identity element of $\Omega$.
\item[(ii)]
For a short exact sequence $0 \to M' \to M \to P \to 0$ in $\cC$ with $P \in \cP$, we have $\cF(M') = \cF(M)$.
\item[(iii)]
For a short exact sequence $0 \to P \to M \to M' \to 0$ in $\cC$ with $P \in \cP$, we have $\cF(M') = \cF(M)$.
\end{itemize}
It is an important fact \cite[Proposition 3.17]{Kata_05} that conditions (ii) and (iii) are equivalent to each other (assuming (i)).
Note that in this setting we do not have to assume $\Omega$ is a commutative monoid, and instead a pointed set structure suffices.
\end{defn}

A fundamental example of a Fitting invariant is of course given by the Fitting ideal; more precisely, the Fitting ideal modulo principal ideals satisfies the axioms of $\cP$-trivial Fitting invariants.

The following proposition introduces another kind of Fitting invariants.

\begin{prop}\label{prop:stab2}
For $n \geq 2$, define $\cF_n: \cC \to \N$ by $\cF_n(M) = r_n(M)$.
We also define $\cF_{0, 1}: \cC \to \Z$ by $\cF_{0, 1}(M) = r_1(M) - r_0(M)$.
Then these maps $\cF_n$ and $\cF_{0, 1}$ are all $\cP$-trivial Fitting invariants.
\end{prop}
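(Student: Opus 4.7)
The plan is to verify directly the three axioms (i), (ii), (iii) of a $\cP$-trivial Fitting invariant for each of the maps $\cF_n$ ($n \geq 2$) and $\cF_{0,1}$. By the cited equivalence of (ii) and (iii) modulo (i) in \cite[Proposition 3.17]{Kata_05}, it actually suffices to check only (i) and one of the two exact-sequence axioms; I would choose (ii) because the long exact sequence of $\Tor^{\Lambda[G]}_*(\F_p, -)$ cleaves most cleanly when the surjective end of the sequence is the $\cP$-term. Both codomains $\N$ and $\Z$ carry obvious commutative monoid structures whose identity element is $0$, so axiom (i) amounts to showing the invariant vanishes on $\cP$.

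Axiom (i) is essentially a restatement of Lemma \ref{lem:Fitt_inv1}: for $P \in \cP$ one has a length-$1$ projective resolution over $\Lambda[G]$, giving $r_n(P) = 0$ for all $n \geq 2$ and $r_1(P) = r_0(P)$. Hence $\cF_n(P) = 0$ and $\cF_{0,1}(P) = 0$.

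For axiom (ii), let $0 \to M' \to M \to P \to 0$ be exact in $\cC$ with $P \in \cP$. The projective dimension bound gives $\Tor_m^{\Lambda[G]}(\F_p, P) = 0$ for all $m \geq 2$. Substituting this into the long exact sequence
\[
\cdots \to \Tor_{n+1}(\F_p, P) \to \Tor_n(\F_p, M') \to \Tor_n(\F_p, M) \to \Tor_n(\F_p, P) \to \cdots
\]
yields, for every $n \geq 2$, an isomorphism $\Tor_n(\F_p, M') \simeq \Tor_n(\F_p, M)$, whence $\cF_n(M') = \cF_n(M)$. For $\cF_{0,1}$, the tail of the same sequence gives an exact hexagon
\[
0 \to \Tor_1(\F_p, M') \to \Tor_1(\F_p, M) \to \Tor_1(\F_p, P) \to M' \otimes \F_p \to M \otimes \F_p \to P \otimes \F_p \to 0,
\]
and taking the alternating sum of $\F_p$-dimensions produces
\[
\bigl(r_1(M') - r_0(M')\bigr) - \bigl(r_1(M) - r_0(M)\bigr) + \bigl(r_1(P) - r_0(P)\bigr) = 0.
\]
Since $\cF_{0,1}(P) = 0$ by axiom (i), this forces $\cF_{0,1}(M') = \cF_{0,1}(M)$, completing the verification of (ii). Axiom (iii) is then free of charge via the cited equivalence.

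There is no real obstacle: the entire argument is driven by the vanishing $\Tor_n(\F_p, P) = 0$ for $n \geq 2$ when $P \in \cP$, which is exactly what the projective-dimension hypothesis provides. The only subtle point worth flagging is that for $\cF_n$ with $n \geq 2$ one gets a genuine isomorphism (no correction needed), whereas for $\cF_{0,1}$ one must pass through an alternating-sum identity — this is precisely why the invariant is defined as the difference $r_1 - r_0$ rather than $r_1$ and $r_0$ separately, since the latter two are not individually invariant under $\cP$-extensions.
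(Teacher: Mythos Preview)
Your proof is correct and follows essentially the same approach as the paper: verify (i) via Lemma~\ref{lem:Fitt_inv1}, verify (ii) via the long exact sequence of $\Tor^{\Lambda[G]}_*(\F_p,-)$ (which collapses into isomorphisms in degrees $\geq 2$ and a six-term exact sequence in degrees $0,1$), and then invoke the equivalence of (ii) and (iii). The paper's proof is terser but structurally identical, and it too remarks that (iii) cannot be obtained directly by the same long-exact-sequence argument.
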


\begin{proof}
We check the conditions (i) and (ii).
Firstly, (i) is a restatement of Lemma \ref{lem:Fitt_inv1}.
Secondly, (ii) follows from the associated long exact sequence, taking Lemma \ref{lem:Fitt_inv1} into account again.
Indeed, the long exact sequence collapses into isomorphisms for degree $\geq 2$ and a $6$-term exact sequence for degrees $0, 1$.
\end{proof}

Note that (iii) cannot be shown in a similar manner.
This is because the lower degree part of the associated long exact sequence becomes an $8$-term exact sequence.
It is important that (iii) follows from (i) and (ii).

\begin{proof}[Proof of Proposition \ref{prop:stab}]
By Proposition \ref{prop:stab2}, it is enough to show $\cF(N) = \cF(N')$ (resp.~$\cF(M) = \cF(M')$) if $M \simeq M'$ (resp.~$N \simeq N'$) for any $\cP$-trivial Fitting invariant $\cF$.
For this, we apply the theory of shifts $\cF^{\langle d \rangle}(-)$, $\cF^{\langle -d \rangle}(-)$ of Fitting invariants of the first author \cite[Theorem 3.19]{Kata_05}.
By the exact sequence involving $M$, $N$, and $P_i$, the definition of the shifts implies
\[
\cF^{\langle d \rangle}(M) = \cF(N),
\quad
\cF^{\langle -d \rangle}(N) = \cF(M),
\]
and similarly for $M'$, $N'$.
Then what we have to show is just a reformulation of the well-definedness of the shifts, which is already established by the first author in \cite[Theorem 3.19]{Kata_05}.
\end{proof}

\subsection{Proof of Theorem \ref{thm:min_resol2}}\label{ss:resol2}

\begin{proof}[Proof of Theorem \ref{thm:min_resol2}]
(1)
By taking the dual of the Tate sequence, 
we obtain an exact sequence
\begin{equation}\label{eq:dual_Tate}
0 \to \Z_p \to Q^{*} \to P^{*} \to X^{*} \to 0,
\end{equation}
where we used $\Z_p^{*} \simeq \Z_p$.

As in \S \ref{ss:pf_spec}, we use the specialization method.
Let us take any element $f \in \cF_0$ that is coprime to $\ch_{\Lambda}(P)$ and $\ch_{\Lambda}(Q)$.
Put $m = \ord_p(f(0)) \geq 1$.
Then \eqref{eq:dual_Tate} yields an exact sequence
\[
0 \to \Z_p/p^m \Z_p \to Q^{*}/f \to P^{*}/f \to X^{*}/f \to 0.
\]
Observe that both $P^{*}/f$ and $Q^{*}/f$ are $G$-cohomologically trivial.
So we have
\[
\hat{H}^{-1}(G, X^{*}/f) \simeq H^1(G, \Z_p/p^m\Z_p),
\]
where $\hat{H}^{-1}$ denotes the Tate cohomology group.
By definition, $\hat{H}^{-1}(G, X^*/f)$ is a submodule of $H_0(G, X^*/f)$, so the above isomorphism shows
\[
\gen_{\cO_f}(H_0(G, X^{*}/f)) \geq \gen_{\cO_f}(H^1(G, \Z_p/p^m\Z_p))
\]
as $\cO_f$ is a DVR.
By Nakayama's lemma, the left hand side is equal to $\gen_{\Lambda[G]}(X^{*})$.
Also, as in the proof of Theorem \ref{thm:min_resol}(1) in \S \ref{ss:pf_spec},
\begin{align}
\gen_{\cO_f}(H^1(G, \Z_p/p^m\Z_p))
& = \gen_{\Z_p/p^m\Z_p}(H^1(G, \Z_p/p^m\Z_p))\\
& = \gen_{\Z_p/p^m\Z_p}(H_1(G, \Z_p/p^m\Z_p))\\
& = \dim_{\F_p}(H_1(G, \F_p))\\
& = s_1,
\end{align}
where the second equality follows from Lemma \ref{lem:HN1} and the third from Lemma \ref{lem:H1}.
Thus we obtain (1).

(2)
In case $G$ is trivial, since the projective dimension of $X$ as a $\Lambda$-module is $\leq 1$, we may apply Lemma \ref{lem:Fitt_inv1} to obtain the assertion.

From now on, we assume $G$ is non-trivial.
Since $s_n = r_n^{\Z_p[G]}(\Z_p)$ by Lemma \ref{lem:hom_r}, a minimal resolution of $\Z_p$ as a $\Z_p[G]$-module is of the form
\begin{equation}\label{eq:resol_Zp}
\cdots \to \Z_p[G]^{s_2} \overset{d_2}{\to} \Z_p[G]^{s_1} \overset{d_1}{\to} \Z_p[G]^{s_0} \overset{\varepsilon}{\to} \Z_p \to 0.
\end{equation}
We truncate it to an exact sequence
\[
0 \to \Ker(d_1) \to \Z_p[G]^{s_1} \overset{d_1}{\to} \Z_p[G]^{s_0} \overset{\varepsilon}{\to} \Z_p \to 0.
\]
Since its dual $(-)^{*}$ is also exact and we have $\Z_p^{*} \simeq \Z_p$ and $\Z_p[G]^{*} \simeq \Z_p[G]$, we obtain an exact sequence
\begin{equation}\label{eq:resol_Kerd}
0 \to \Z_p \overset{\varepsilon^{*}}{\to} \Z_p[G]^{s_0} \overset{d_1^{*}}{\to} \Z_p[G]^{s_1} \to \Ker(d_1)^{*} \to 0.
\end{equation}
By comparing \eqref{eq:dual_Tate} and \eqref{eq:resol_Kerd}, Proposition \ref{prop:stab}(2) implies
\[
r_1(X^{*}) - r_0(X^{*}) = r_1(\Ker(d_1)^{*}) - r_0(\Ker(d_1)^{*})
\]
and
\[
r_n(X^{*}) = r_n(\Ker(d_1)^{*})
\]
for $n \geq 2$.

Let us compute $r_n(\Ker(d_1)^{*})$ for any $n \geq 0$.
We combine \eqref{eq:resol_Kerd} with \eqref{eq:resol_Zp} to an exact sequence
\[
\cdots \to \Z_p[G]^{s_2} \overset{d_2}{\to} \Z_p[G]^{s_1} \overset{d_1}{\to} \Z_p[G]^{s_0} \overset{\varepsilon^{*} \circ \varepsilon}{\to}
\Z_p[G]^{s_0} \overset{d_1^{*}}{\to} \Z_p[G]^{s_1} \to \Ker(d_1)^{*} \to 0.
\]
By construction, this is a minimal resolution of $\Ker(d_1)^{*}$ as a $\Z_p[G]$-module.
For this we need the hypothesis that $G$ is non-trivial; the map $\varepsilon^{*} \circ \varepsilon$ can be identified with the map $\Z_p[G] \to \Z_p[G]$ that sends $1$ to the norm element $N_G = \sum_{g \in G} g$, and $N_G$ is in the Jacobson radical of $\Z_p[G]$ if and only if $G$ is non-trivial.
Therefore, we obtain
\[
r_n^{\Z_p[G]}(\Ker(d_1)^{*}) = 
\begin{cases}
	s_1 & (n = 0)\\
	s_0 & (n = 1, 2)\\
	s_{n-2} & (n \geq 3).
\end{cases}
\]
By applying Lemma \ref{lem:resol_bc}, we obtain
\[
r_n^{\Lambda[G]}(\Ker(d_1)^{*}) = 
\begin{cases}
	s_1 & (n = 0)\\
	s_1 + s_0 & (n = 1)\\
	s_0 + s_0 & (n = 2)\\
	s_{n-2} + s_{n-3} & (n \geq 3).
\end{cases}
\]
This completes the proof.
\end{proof}

\section{Proof of the rest of Theorems \ref{thm:bound2}(1) and \ref{thm:gen_dual}(1)}\label{sec:pf}

Now we return to the arithmetic situation described in \S \ref{ss:set}.

\subsection{Proof of Theorem \ref{thm:gen_dual}(1)}\label{ss:dual}

First we recall the following well-known fact.

\begin{lem}\label{lem:Gab}
We have an isomorphism
\[
(X_{k_{\infty}, S})_{\Gal(k_{\infty}/k)}
\simeq \Gal(M_{k, S}/k_{\infty}).
\]
In particular, by Nakayama's lemma we have
\[
\gen_{\Lambda}(X_{k_{\infty}, S}) = \rank_p\Gal(M_{k, S}/k_{\infty}).
\]
\end{lem}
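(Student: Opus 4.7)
The plan is to place $M_{k,S}$ inside $M_{k_{\infty},S}$, to identify $M_{k,S}$ as the maximal abelian subextension of $M_{k_{\infty},S}/k$, and then to read off the coinvariants from the resulting group extension.

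First I would verify $M_{k,S} \subseteq M_{k_{\infty},S}$. The extension $M_{k,S}/k$ is abelian, pro-$p$, and unramified outside $S$, and $M_{k,S}/k_{\infty}$ inherits all three properties: the inertia subgroup of any prime of $k_{\infty}$ in $\Gal(M_{k,S}/k_{\infty})$ is contained in the inertia of the prime of $k$ below it, so unramifiedness outside $S$ is preserved. Hence $M_{k,S}$ is contained in the maximal such extension $M_{k_{\infty},S}$ of $k_{\infty}$. Next, $M_{k_{\infty},S}/k$ is itself a Galois pro-$p$ extension unramified outside $S$ (using that $k_{\infty}/k$ is pro-$p$ and unramified outside the $p$-adic places, which lie in $S$), so by the defining maximality of $M_{k,S}$ its maximal abelian subextension over $k$ is exactly $M_{k,S}$.

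Set $G = \Gal(M_{k_{\infty},S}/k)$, $H = X_{k_{\infty},S} = \Gal(M_{k_{\infty},S}/k_{\infty})$, and $\Gamma = \Gal(k_{\infty}/k)$, giving the exact sequence $1 \to H \to G \to \Gamma \to 1$ in which $H$ is abelian and $\Gamma \simeq \Z_p$ is pro-cyclic. A short group-theoretic argument shows that the commutator subgroup satisfies $G' = [G,H] = I_{\Gamma} H$: the quotient $G/[G,H]$ is a central extension of the pro-cyclic group $\Gamma$ by the abelian group $H/[G,H]$, and any such central extension is itself abelian (lift a topological generator of $\Gamma$ and observe it commutes with everything). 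The action of $\Gamma$ on $H$ via conjugation by lifts makes $[G,H] = I_{\Gamma} H$ in additive notation for $H$.

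Combining, $\Gal(M_{k,S}/k) = G/G' = G/(I_{\Gamma} H)$, and restricting to $k_{\infty}$ yields a short exact sequence $0 \to H_{\Gamma} \to \Gal(M_{k,S}/k) \to \Gamma \to 0$, whose kernel is $\Gal(M_{k,S}/k_{\infty})$. This gives the desired isomorphism $(X_{k_{\infty},S})_{\Gamma} \simeq \Gal(M_{k,S}/k_{\infty})$, and the consequence for $\gen_{\Lambda}$ then follows from Nakayama's lemma as stated. There is no serious obstacle; the only steps requiring care are the ramification-theoretic inclusion of $M_{k,S}$ in $M_{k_{\infty},S}$ and the (easy) identification of $G'$. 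Equivalently, one could deduce the isomorphism directly from the 5-term exact sequence of Hochschild--Serre in group homology using $H_2(\Gamma,\Z)=0$.
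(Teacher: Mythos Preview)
The paper does not actually give a proof of this lemma; it is introduced with the phrase ``we recall the following well-known fact'' and stated without argument. Your proof is correct and is the standard one: the identification of $M_{k,S}$ with the maximal abelian subextension of $M_{k_{\infty},S}/k$ is straightforward, and the key group-theoretic step---that $G' = I_{\Gamma}H$ because $\Gamma \simeq \Z_p$ is pro-cyclic---is exactly right. The alternative you mention via Hochschild--Serre and $H_2(\Gamma,\Z_p)=0$ is equally valid and amounts to the same thing.
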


To prove Theorem \ref{thm:gen_dual}(1), we also need the following general lemma.

\begin{lem}\label{lem:dual_pd1}
As in \S \ref{sec:abst_Tate}, consider $\Lambda = \Z_p[[T]]$ and a finite $p$-group $G$.
For a finitely generated torsion $\Lambda[G]$-module $M$ whose projective dimension is $\leq 1$, we have
\[
\gen_{\Lambda[G]}(M^*)
= \gen_{\Lambda[G]}(M).
\]
\end{lem}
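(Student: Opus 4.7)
The plan is to compare a minimal free presentation of $M$ with the one induced on $M^{*}$ by dualization, and then to close the argument using the reflexivity $M^{**} \simeq M$ rather than by directly computing the matrix of the dual map.

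First, since $\Lambda[G]$ is a Noetherian local ring with residue field $\F_p$ and $M$ has projective dimension at most $1$, the module $M$ admits a minimal free presentation
\[
0 \to \Lambda[G]^b \overset{\phi}{\to} \Lambda[G]^a \to M \to 0,
\]
with $a = \gen_{\Lambda[G]}(M)$. The equality $a = b$ is forced by $M$ being $\Lambda[G]$-torsion: after base change to $\Frac(\Lambda)[G]$, which is semisimple by Maschke's theorem and hence has the invariant basis number property, the sequence becomes an isomorphism between free modules of equal rank.

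Next, I would apply $\Hom_{\Lambda[G]}(-, \Lambda[G])$ to this presentation. Since $M$ is torsion, any homomorphism $M \to \Lambda[G]$ is killed by a central non-zero-divisor annihilating $M$, hence vanishes; so $\Hom_{\Lambda[G]}(M, \Lambda[G]) = 0$. Since $\pd_{\Lambda[G]}(M) \leq 1$, also $\Ext^{i}_{\Lambda[G]}(M, \Lambda[G]) = 0$ for $i \geq 2$. Dualization therefore yields an exact sequence
\[
0 \to \Lambda[G]^a \overset{\phi^*}{\to} \Lambda[G]^a \to M^* \to 0,
\]
which exhibits $M^*$ as generated by $a$ elements over $\Lambda[G]$, giving the one-sided inequality $\gen_{\Lambda[G]}(M^*) \leq a = \gen_{\Lambda[G]}(M)$.

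The very same sequence shows that $M^*$ is again finitely generated torsion over $\Lambda[G]$ with projective dimension at most $1$, so the argument above applied to $M^*$ produces $\gen_{\Lambda[G]}(M^{**}) \leq \gen_{\Lambda[G]}(M^*)$. Combining this with the reflexivity isomorphism $M^{**} \simeq M$ (by \cite[Proposition 5.5.8 (iv)]{NSW08}, already invoked earlier in the paper) supplies the reverse inequality and completes the proof. The step I expect to require the most care is a clean handling of the left/right module structures in the possibly non-abelian case, so that $(-)^{*}$ is genuinely an involutive, exact endofunctor of the category of finitely generated torsion $\Lambda[G]$-modules of projective dimension at most one; once that is pinned down, the symmetry argument sidesteps any matrix computation and avoids having to verify minimality of $\phi^{*}$ by hand.
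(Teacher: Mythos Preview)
Your proof is correct and shares the same opening move as the paper's: take a minimal resolution $0 \to \Lambda[G]^a \to \Lambda[G]^a \to M \to 0$ and dualize to obtain $0 \to \Lambda[G]^a \to \Lambda[G]^a \to M^* \to 0$. The difference lies in the final step. The paper simply asserts that the dualized sequence is again a \emph{minimal} resolution, which immediately gives $\gen_{\Lambda[G]}(M^*) = a$; this holds because minimality means the matrix of $\phi$ has all entries in the Jacobson radical $\fm$, and the standard anti-involution $g \mapsto g^{-1}$ on $\Lambda[G]$ preserves $\fm$, so the matrix of $\phi^*$ also has entries in $\fm$. You instead extract only the inequality $\gen_{\Lambda[G]}(M^*) \leq \gen_{\Lambda[G]}(M)$ from the dual sequence and then invoke reflexivity $M^{**} \simeq M$ to get the reverse inequality. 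Your route trades the (easy but slightly fiddly in the non-abelian case) involution check for the reflexivity input, which is legitimate here since $\pd_{\Lambda[G]}(M) \leq 1$ forces $\pd_{\Lambda}(M) \leq 1$ (as $\Lambda[G]$ is $\Lambda$-free), so $M$ has no nontrivial finite $\Lambda$-submodule and \cite[Proposition~5.5.8~(iv)]{NSW08} applies. Both arguments are short; the paper's is marginally more self-contained, while yours cleanly sidesteps the left/right bookkeeping you flagged.
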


\begin{proof}
As in Lemma \ref{lem:Fitt_inv1}, the minimal resolution of $M$ is of the form $0 \to \Lambda[G]^a \to \Lambda[G]^a \to M \to 0$.
By taking $\Ext$-functor, this induces an exact sequence $0 \to \Lambda[G]^a \to \Lambda[G]^a \to M^* \to 0$, which is again a minimal resolution.
Thus we obtain the lemma.
\end{proof}

\begin{proof}[Proof of Theorem \ref{thm:gen_dual}(1)]
Applying the dual of the Tate sequence introduced in (\ref{eq:dual_Tate}) to our setting, we obtain an exact sequence
\[
0 \to \Z_p^{*} \to Q^{*} \to P^{*} \to X_{K_{\infty}, S}^{*} \to 0.
\]
By the compatibility of the Tate sequences, we obtain an isomorphism
\[
(X_{K_{\infty}, S}^{*})_G 
\simeq X_{k_{\infty}, S}^{*}.
\]
Therefore, we have
\[
\gen_{\cR}(X_{K_{\infty}, S}^{*})
= \gen_{\Lambda}(X_{k_{\infty}, S}^{*}).
\]
By Lemma \ref{lem:dual_pd1} (with $G$ trivial), this is then equal to $\gen_{\Lambda}(X_{k_{\infty}, S})$.
By Lemma \ref{lem:Gab}, this completes the proof.
\end{proof}

\subsection{Proof of Theorem \ref{thm:bound2}(1)}
Note that applying Theorem \ref{thm:min_resol}(1) to the Tate sequence introduced in Theorem \ref{thm:Tate}, we already obtained the inequality $s_2 \leq \gen_{\cR}(X_{K_{\infty}, S})$.
Therefore, it remains only to prove $t \leq \gen_{\cR}(X_{K_{\infty},S}) \leq s_2 + t$.

\begin{lem} \label{lem:coh1}
We have an exact sequence 
$$
0 \to H_{2}(G, \Z) \to (X_{K_{\infty},S})_{G} \to X_{k_{\infty},S} \to G^{\ab} \to 0
$$
of $\Lambda$-modules. 
\end{lem}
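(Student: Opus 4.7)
The plan is to obtain the sequence from the Tate sequence of Theorem \ref{thm:Tate} by taking $G$-homology and exploiting the functoriality clause for $H = G$.

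First I would split the Tate sequence $0 \to X_{K_{\infty}, S} \to P \xrightarrow{\phi} Q \to \Z_p \to 0$ into $0 \to X_{K_{\infty}, S} \to P \to L \to 0$ and $0 \to L \to Q \to \Z_p \to 0$ with $L := \Imag(\phi)$. The modules $P$ and $Q$ have $\cR$-projective dimension $\leq 1$ with $\cR = \Lambda[G] \simeq \Lambda \otimes_{\Z_p} \Z_p[G]$, and the latter is $G$-cohomologically trivial by flatness of $\Lambda$ over $\Z_p$; since $G$-cohomological triviality is preserved by direct summands and extensions, $P$ and $Q$ are themselves $G$-cohomologically trivial. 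Taking $G$-homology thus collapses to
$$0 \to H_1(G, L) \to (X_{K_{\infty}, S})_G \to P_G \to L_G \to 0$$
and
$$0 \to H_1(G, \Z_p) \to L_G \to Q_G \to \Z_p \to 0,$$
together with a connecting isomorphism $H_1(G, L) \simeq H_2(G, \Z_p)$. Since $G$ is a finite $p$-group, $H_n(G, \Z_p) \simeq H_n(G, \Z)$ in positive degrees, so $H_1(G, L) \simeq H_2(G, \Z)$ and $H_1(G, \Z_p) = G^{\ab}$.

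Next, by the functoriality clause of Theorem \ref{thm:Tate} applied with $H = G$, the Tate sequence for $k_{\infty}$ identifies $X_{k_{\infty}, S}$ with $\Ker(P_G \to Q_G)$. The composition $(X_{K_{\infty}, S})_G \to P_G \to Q_G$ factors through $L_G$, and by the first displayed sequence its image already vanishes in $L_G$; hence it lands in $X_{k_{\infty}, S}$, giving a canonical map $\alpha: (X_{K_{\infty}, S})_G \to X_{k_{\infty}, S}$.

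Finally I would identify kernel and cokernel of $\alpha$. The kernel is $H_1(G, L) \simeq H_2(G, \Z)$ by the first displayed sequence. For the cokernel I would apply the snake lemma to the commutative diagram with rows
$$0 \to (X_{K_{\infty}, S})_G / H_1(G, L) \to P_G \to L_G \to 0$$
and
$$0 \to X_{k_{\infty}, S} \to P_G \to \Imag(L_G \to Q_G) \to 0,$$
in which the middle vertical map is the identity and the right vertical map is the natural surjection $L_G \twoheadrightarrow \Imag(L_G \to Q_G)$; the latter has kernel $H_1(G, \Z_p) = G^{\ab}$, so the snake lemma yields $\Coker(\alpha) \simeq G^{\ab}$. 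Concatenating produces the claimed four-term sequence. I do not anticipate a serious obstacle; the only point requiring care is the $G$-cohomological triviality of $P$ and $Q$, which hinges on the correct interpretation of the $\cR$-projective dimension via $\cR = \Lambda \otimes_{\Z_p} \Z_p[G]$.
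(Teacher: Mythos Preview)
Your argument is correct, but it follows a genuinely different route from the paper's. The paper proves Lemma~\ref{lem:coh1} via the five-term inflation--restriction sequence coming from the Hochschild--Serre spectral sequence for $G = \Gal(K_\infty/k_\infty)$ with coefficients $\Q_p/\Z_p$, truncated on the right using the weak Leopoldt conjecture $H^2(\cO_{k_\infty,S},\Q_p/\Z_p)=0$, and then takes Pontryagin duals (using Lemma~\ref{lem:HN1}) to convert $H^i(G,\Q_p/\Z_p)$ into $H_i(G,\Z)$ and invariants into coinvariants. Your approach instead stays entirely within the Tate-sequence framework: you take $G$-homology of the sequence of Theorem~\ref{thm:Tate}, use $G$-cohomological triviality of $P$ and $Q$, and invoke the functoriality clause of that theorem to identify $\Ker(\phi_G)$ with $X_{k_\infty,S}$. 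In effect you are running the argument of Proposition~\ref{prop:sp_hom} without first reducing modulo $f$.

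Each approach has its merits. The paper's proof is shorter and logically prior to the Tate sequence: it needs only standard Galois cohomology and duality, so Lemma~\ref{lem:coh1} stands independently of Theorem~\ref{thm:Tate}. Your proof, by contrast, ties Lemma~\ref{lem:coh1} to the same machinery used elsewhere in the paper, giving a pleasingly uniform treatment and making transparent why the homology groups $H_2(G,\Z)$ and $G^{\ab}$ appear (as the obstructions to $(-)_G$ being exact on the Tate sequence). One small expository point: your justification of $G$-cohomological triviality of $P$ and $Q$ via ``flatness of $\Lambda$ over $\Z_p$'' is a bit telegraphic; what you actually use is that $\Lambda[G]$, as a $\Z_p[G]$-module, is a product of copies of $\Z_p[G]$ (or equivalently is $\Z_p[G]$-induced), hence has vanishing Tate cohomology, and this passes to finitely generated projectives and then to $P,Q$ via the two-term free resolution. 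But this is not a gap, only a matter of phrasing.
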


\begin{proof}
By the Hochschild--Serre spectral sequence we have an exact sequence 
$$
0 \to H^1(G, \Q_p/\Z_p) \to H^1({\mathcal O}_{K_{\infty},S},\Q_p/\Z_p) \to 
H^1({\mathcal O}_{k_{\infty},S},\Q_p/\Z_p)^G \to H^2(G, \Q_p/\Z_p) \to 0,
$$
where we used the weak Leopoldt conjecture $H^2({\mathcal O}_{K_{\infty},S},\Q_p/\Z_p)=0$. 
For $i = 1, 2$, the Pontryagin dual of $H^i(G, \Q_p/\Z_p)$ is the projective limit of 
$H_{i}(G, \Z/p^m)$ by Lemma \ref{lem:HN1}. 
But since $G$ is a finite $p$-group, 
it is finite and isomorphic to $H_{i}(G, \Z)$.
Therefore, taking the Pontryagin dual of the above exact sequence and using Lemma \ref{lem:H3}, we get the conclusion.
\end{proof}

Note that $G^{\ab} = \Gal(M_{k_{\infty}, S} \cap K_{\infty}/k_{\infty})$.
Then putting $Y=\Gal(M_{k_{\infty},S}/M_{k_{\infty}, S} \cap K_{\infty})$, by Lemma \ref{lem:coh1} we obtain two exact sequences
\[
0 \to H_{2}(G, \Z) \to (X_{K_{\infty},S})_{G} \to Y \to 0,
\quad
0 \to Y \to X_{k_{\infty},S} \to G^{\ab} \to 0.
\]
The first sequence implies
\begin{equation}\label{eq:Y1}
\gen_{\Lambda}(Y) 
\leq \gen_{\cR}(X_{K_{\infty},S})
\leq \gen_{\Z_p}(H_{2}(G, \Z)) + \gen_{\Lambda}(Y).
\end{equation}
On the other hand, by taking the $\Gamma=\Gal(k_{\infty}/k)$-coinvariant of the second sequence, we obtain an exact sequence
\[
G^{\ab} \to Y_{\Gamma}  \to (X_{k_{\infty},S})_{\Gamma} \to G^{\ab} \to 0.
\]
By Lemma \ref{lem:Gab}, this is reformulated as
\[
G^{\ab} \to Y_{\Gamma}  \to \Gal(M_{k, S}/M_{k_{\infty}, S} \cap K_{\infty}) \to 0.
\]
This sequence, together with $M_{k_{\infty},S} \cap K_{\infty}=M_{k,S} \cap K_{\infty}$ and the definition of $t$, implies that
\begin{equation}\label{eq:Y2}
t \leq \gen_{\Lambda}(Y) \leq \gen_{\Z_p}(G^{\ab})+ t.
\end{equation}
By combining \eqref{eq:Y1} and \eqref{eq:Y2}, we obtain
\[
t 
\leq \gen_{\cR}(X_{K_{\infty},S})
\leq \gen_{\Z_p}(H_{2}(G, \Z)) + \gen_{\Z_p}(G^{\ab})+ t
= s_2 + t,
\]
where we used Lemma \ref{lem:H1} to get the final equality.
Note that if $G$ is abelian, we have $H_{2}(G, \Z) \simeq \bigwedge^2 G$ and 
$\gen_{\Z_p}(\bigwedge^2 G)=s(s-1)/2$ (with $s = \rank_p G$), which imply explicitly
$$s+\gen_{\Z_p}(H_{2}(G, \Z))=s + s(s-1)/2 = s(s+1)/2=s_2.$$
This completes the proof of Theorem \ref{thm:bound2}(1).

\section{Numerical Examples}\label{sec:examples}

In this section, we numerically check the inequality concerning $\gen_{\cR}(X_{K_{\infty}, S})$ in Theorem \ref{thm:bound1} by using the computer package PARI/GP.
We consider $k = \Q$ and its finite abelian $p$-extension $K$ that is totally real.
Let $S$ be a finite set of places of $\Q$ containing $p$ and the archimedean place $\infty$, such that $K/\Q$ is unramified outside $S$.

The basic method is as follows.
First, by Nakayama's lemma and Lemma \ref{lem:Gab}, we have 
\[
\gen_{\Z_p[[\Gal(K_{\infty}/\Q)]]}(X_{K_{\infty}, S})
= \gen_{\Z_p}((X_{K_{\infty}, S})_{\Gal(K_{\infty}/\Q)})
= \gen_{\Z_p}(\Gal(M_{K, S}/K_{\infty})_{\Gal(K/\Q)}).
\]
We observe that $M_{K, S}$ is the union of the maximal $p$-extensions of $K$ in the ray class fields of modulus $p^m \prod_{v \in S \setminus \{p\}} v$ for all $m \geq 0$.
Note here that, since $K$ is abelian over $\Q$, the Leopoldt conjecture is shown to be true for $K$ by work of Brumer (see \cite[Theorem 10.3.16]{NSW08}).
Therefore, $\Gal(M_{K, S}/K_{\infty})$ is finite, and we can compute it by computing the ray class groups for finitely many $m$.
In this way we can determine the quantity $\gen_{\Z_p}(\Gal(M_{K, S}/K_{\infty})_{\Gal(K/\Q)})$.

\subsection{The case $p = 3$}

Let us take $p = 3$, though the discussion is basically valid for any odd prime $p$.
We write $S \setminus \{p, \infty\} = \{\ell_1, \dots, \ell_s\}$.
By the theorem of Kronecker--Weber, the Galois group of the maximal abelian extension of $\Q$ unramified outside $S$ is isomorphic to $\Z_p^{\times} \times \prod_{i = 1}^s \Z_{\ell_i}^{\times}$, so we have
\[
\Gal(M_{\Q, S}/\Q) \simeq	\Z_p \times \prod_{i = 1}^s \Z_p/(\ell_i-1)\Z_p.
\]
As is well-known, we may assume $\ell_i \equiv 1 (\bmod \ p)$ for $1 \leq i \leq s$ without loss of generality.
For such an $S$, let us take $K$ as the unique intermediate field of $\Q(\mu_{\ell_1}, \dots, \mu_{\ell_s})/\Q$ such that
\[
\Gal(K/\Q) \simeq (\Z/p\Z)^s.
\]
In this case, the above information on $M_{\Q, S}$ implies that
\[
\Gal(M_{\Q, S}/K_{\infty}) \simeq \prod_{i = 1}^s p\Z_p/(\ell_i-1)\Z_p,
\]
so the integer $t$ in Theorem \ref{thm:bound1} is determined as $t = \# \{1 \leq i \leq s \mid \ell_i \equiv 1 (\bmod \  p^2)\}$.
To ease the notation, we write $\gen(X) = \gen_{\Z_p[[\Gal(K_{\infty}/\Q)]]}(X_{K_{\infty}, S})$. 
Then Theorem \ref{thm:bound1} asserts 
\[
\max \left\{ \frac{s(s+1)}{2}, t \right\} \leq \gen(X) \leq \frac{s(s+1)}{2} + t.
\]
Let $P$ be the set of all prime numbers $\ell$ such that $\ell \equiv 1 (\bmod \  p)$.
We divide $P$ into two subsets $P_1$ and $P_2$ defined by 
\[
P_1=\{\ell \in P \mid \ell \not \equiv 1 (\bmod \ p^2)\}=
\{7, 13, 31, 43, 61, 67, 79, 97, \dots.\}
\]
and 
\[
P_{2}=\{\ell \in P  \mid \ell \equiv 1 (\bmod \ p^2)\}=\{19, 37, 73, 109, 127, 163, 181, 199, \dots.\}.
\]
Then we have $t = \# \{1 \leq i \leq s \mid \ell_i \in P_2\}$.

First we consider the case $t = 0$, that is, $\ell_{1}, \dots, \ell_{s}$ are $s$ distinct primes in $P_1$.  
Then Theorem \ref{thm:bound1} asserts $\gen(X) = s(s+1)/2$ as in Remark \ref{rem:t=0}(1).
By using PARI/GP, we numerically checked $\gen(X) = 1$ if $s = 1$ and $\ell_1 \leq 100$, and $\gen(X) = 3$ if $s = 2$ and $\ell_1, \ell_2 \leq 100$, as Theorem \ref{thm:bound1} says.  


Suppose that $t = 1$ and $s = 1$, that is, $S=\{3, \ell_1, \infty\}$ with $\ell_1 \in P_2$. 
Then Theorem \ref{thm:bound1} asserts 
\[
1 \leq \gen(X) \leq 2.
\]
By numerical computation, we find only $\gen(X) = 2$ in the range $\ell_1 \leq 200$, 
and did not encounter $\gen(X) = 1$.

Next consider the case $t = 1$ and $s = 2$, that is, $S=\{3, \ell_1, \ell_2, \infty\}$ with $\ell_1 \in P_1$ and $\ell_2 \in P_2$. 
Then we have 
\[
3 \leq \gen(X) \leq 4
\]
by Theorem \ref{thm:bound1}.
In the range $\ell_1 < 100$ and $\ell_2 < 200$, we find $\gen(X)=3$ except for
\begin{align}
(\ell_1, \ell_2) = 
& (7, 127), (7, 181), (13, 73), (13, 109), (13, 181), (31, 109), (31, 163), (43, 127),\\
&  (43, 199), (61, 37), (61, 163), (67, 109), (79, 199), (97, 19), (97, 109), (97, 127),
\end{align}
for which $\gen(X) = 4$.
Thus the above inequality on $\gen(X)$ is sharp in this case.

Finally we consider the case $t = 2$ and $s = 2$, that is, $S=\{3, \ell_1, \ell_2, \infty\}$ with $\ell_1$, $\ell_2 \in P_2$. 
Then Theorem \ref{thm:bound1} says 
\[
3 \leq \gen(X) \leq 5.
\]
By numerical computation, in the range $\ell_1 < \ell_2 \leq 200$, we find $\gen(X) = 4$ except for $(\ell_1, \ell_2) = (109, 199)$, for which $\gen(X) = 5$.
We did not encounter $\gen(X) = 3$ in this situation.

Due to the limitation of the machine power we could not handle $s \geq 3$ when $p = 3$.

\subsection{The case $p = 2$}

Let us discuss the case $p = 2$.
Suppose that $S=\{2, \ell_1, \dots, \ell_s, \infty\}$ where $\ell_1, \dots, \ell_s$ are $s$ distinct odd prime numbers.
We have
\[
\Gal(M_{\Q, S}/\Q) \simeq	\Z_2 \times \Z/2\Z \times \prod_{i = 1}^s \Z_2/(\ell_i-1) \Z_2.
\]
Let us set $K$ as $K = \Q(\sqrt{\ell_1}, \dots, \sqrt{\ell_s})$, so we have $\Gal(K/\Q) \simeq (\Z/2\Z)^s$ and
\[
\Gal(M_{\Q, S}/K_{\infty}) \simeq \Z/2\Z \times \prod_{i = 1}^s 2\Z_2/(\ell_i-1)\Z_2.
\]
As in the previous subsection, we define $P$ to be the set of all odd prime numbers, $P_2$ to be the subset of $P$ consisting of $\ell$ such that $\ell \equiv 1 (\bmod \ 4)$, and $P_1=P \setminus P_2$. 
The integer $t$ in Theorem \ref{thm:bound1} is then $t = 1 + \# \{1 \leq i \leq s \mid \ell_i \in P_2\}$.

We first consider the case $s=1$, so $K=\Q(\sqrt{\ell_1})$.
If $\ell_1$ is in $P_1$, then $t=1$ and 
Theorem \ref{thm:bound1} says $1 \leq \gen(X) \leq 2$.
For $\ell_1 \in P_1$ less than $100$, we always have $\gen(X) = 2$.
If $\ell_1$ is in $P_2$, then $t=2$ and 
Theorem \ref{thm:bound1} says $2 \leq \gen(X) \leq 3$.
For $\ell_1 \in P_2$ less than $100$, we have $\gen(X) = 2$ except for $\ell_1=73$, $89$, $97$, for which we have $\gen(X) = 3$.
So in this case the inequality is sharp.

In the following we focus on the case $\ell_1, \dots, \ell_s$ are all in 
\[
P_2=\{5, 13, 17, 29, 37, 41, 53, 61, 73, 89, 97, \dots.\}.
\] 
Then $t=s+1$, and the inequality in Theorem \ref{thm:bound1} becomes 
\[
\frac{s(s+1)}{2} \leq \gen(X) \leq \frac{s(s+1)}{2} + s + 1
\] 
for $s \geq 2$.

For $s=2$, we have 
\[
3 \leq \gen(X) \leq 6
\]
by Theorem \ref{thm:bound1}.
By numerical computation, we find $\gen(X) = 4, 5, 6$ in the range $\ell_1 < \ell_2 \leq 100$.
Concretely, we have $\gen(X) = 6$ if 
\[
(\ell_1, \ell_2) = (17, 89), (41, 73), (73, 89), (73, 97), (89, 97),
\]
$\gen(X) = 5$ if 
\begin{align}
(\ell_1, \ell_2) = 
& (5, 41), (5, 89), (13, 17), (17, 41), (17, 53), (17, 73), (17, 97), (37, 41), \\
& (37, 73), (41, 61), (41, 89), (41, 97), (53, 89), (53, 97), (61, 73), (61, 97),
\end{align}
and $\gen(X) = 4$ otherwise.

For $s=3$, Theorem \ref{thm:bound1} says  
\[
6 \leq \gen(X) \leq 10.
\]
For $\ell_1 = 5$ and $5 < \ell_2 < \ell_3 \leq 100$, 
we have $\gen(X)=7$ except for
\[
(\ell_2,\ell_3) = 
(17, 89), (37,41), (41,61), (41, 73), (41, 89), (53, 89), (73, 89), (89, 97),
\]
for which we have $\gen(X)=8$.
Also, we find examples for $\gen(X) = 9, 10$ by taking respectively $(\ell_1, \ell_2, \ell_3) = (17, 73, 89), (73, 89, 97)$.


\subsection{A variant}
As a final remark, let us briefly discuss a variant that matters only when $p = 2$.
So far we always assumed that $S$ contains all archimedean places, so we studied the narrow class groups.
Theoretically this assumption is necessary to use the Tate sequence in Theorem \ref{thm:Tate}.
However, the numerical computation in this section is possible (and simpler) even if we remove the archimedean places from $S$.

Suppose that $p=2$, $k=\Q$, $K=\Q(\sqrt{\ell_1}, \dots, \sqrt{\ell_s})$ as in the previous subsection, and $S'=S \setminus \{\infty\}$.
We consider $\gen_{\Z_p[[\Gal(K_{\infty}/\Q)]]}(X_{K_{\infty}, S'})$, which we abbreviate as $\gen(X')$.
We have a natural surjective homomorphism from $X_{K_{\infty}, S}$ to $X_{K_{\infty}, S'}$ whose kernel is a cyclic module (since $k = \Q$).
Therefore, we have
\[
\gen(X') \leq \gen(X) \leq \gen(X') + 1.
\]
Still assuming $\ell_i \equiv 1 (\bmod \ 4)$ for any $1 \leq i \leq s$, we find the following numerical examples.
\begin{itemize}
\item
When $s = 1$, we find examples for $\gen(X') = 1, 2$.
\item
When $s = 2$, we find examples for $\gen(X') =3, 4, 5$.
\item
When $s = 3$, we find examples for $\gen(X') = 6, 7, 8, 9$.
\end{itemize}
These results suggest that $s(s+1)/2 \leq \gen(X') \leq s(s+1)/2 + s$, but this does not follow directly from Theorem \ref{thm:bound1}.
The above computations suggest that Theorem \ref{thm:bound1} holds true without assumption that $S$ contains all archimedean places.

{
\bibliographystyle{abbrv}
\bibliography{biblio}
}

\end{document}